\numberwithin{equation}{section}
\newtheorem{theorem}{Theorem}[section]
\newtheorem{lemma}[theorem]{Lemma}
\newtheorem{remark}[theorem]{Remark}
\newtheorem{corollary}[theorem]{Corollary}
\newtheorem{algorithm}{Algorithm}
\newtheorem{assumption}{Assumption}
\newcommand\be{\begin{equation}}
\newcommand\ee{\end{equation}}
\renewcommand{\subset}{\subseteq}
\newcommand{\dx}{\,\text{\rm{}d}x}
\newcommand{\dy}{\,\text{\rm{}d}y}
\def\N{\mathbb  N}
\def\R{\mathbb  R}
\renewcommand{\phi}{\varphi}
\newcolumntype{L}{>{$}l<{$\quad}}
\newcolumntype{R}{>{$}r<{$\quad}}
\newcolumntype{C}{>{$}c<{$}}
\renewcommand{\theenumi}{(\arabic{enumi})}
\title{
% A Sparsity Promoting Optimization Algorithm in Fractional Order Spaces
Sparse optimization problems in fractional order Sobolev spaces
\footnote{HA was partially supported by the US National Science Foundation (NSF) grants
DMS-2110263, DMS-1913004, the Air Force Office of Scientific Research (AFOSR)
under Award NO: FA9550-19-1-0036. DW was partially supported by the German
Research Foundation DFG under project grant Wa 3626/3-2.}
}
\author{
Harbir Antil
\footnote{Department of Mathematical Sciences and the Center for Mathematics and
Artificial Intelligence (CMAI), George Mason University, Fairfax, VA 22030, USA, {\tt hantil@gmu.edu}.
},
Daniel Wachsmuth%
\footnote{Institut f\"ur Mathematik,
Universit\"at W\"urzburg,
97074 W\"urzburg, Germany, {\tt daniel.wachsmuth@mathematik.uni-wuerzburg.de}.
}}
\begin{document}

\maketitle

\paragraph{Abstract.}
We consider
optimization problems in the fractional order Sobolev spaces %$H^s(\Omega)$, $s\in (0,1)$,
with sparsity promoting objective functionals containing $L^p$-pseudonorms, $p\in (0,1)$.
Existence of solutions is proven. By means of a smoothing scheme, we obtain first-order optimality conditions,
which contain an equation with the fractional Laplace operator.
An algorithm based on this smoothing scheme is developed. Weak limit points of iterates are shown
to satisfy a stationarity system that is slightly weaker than that given by  the necessary condition.

\paragraph{Keywords.} Fractional Sobolev space, sparse optimization, $L^p$ functionals, necessary optimality conditions.

\paragraph{MSC classification.}
49K20, % Optimality conditions Problems involving partial differential equations
49J20, % Existence theories for optimal control problems involving partial differential equations
49M20, % Methods of relaxation type
35R11 %  Fractional partial differential equations

{\color{red}
\paragraph{Note.}
This is a correction of the published version\footnote{see Inverse problems 39, 044001 (2023), \doi{10.1088/1361-6420/acbe5e}}.
In the original publication, the result of \cref{lem_boundedl1_lambda} was wrong as stated.
The result can be corrected by using a slightly different penalization approach in \cref{sec_optimality}.
Differences to the published version are marked in red.
The authors wish to thank Anna Lentz for pointing out this error.
}

\section{Introduction}

We are interested in the following optimization problem
\begin{equation}\label{eq_problem}
\min_{u\in H^s(\Omega)} F(u) + \frac\alpha2 \|u\|_{H^s(\Omega)}^2 + \beta  \|u\|_p^p,
\end{equation}
where
$F: H^s(\Omega) \to \R$ is assumed to be smooth,
and
\begin{equation}\label{eq_lpnorm}
	\|u\|_p^p:= \int_\Omega |u|^p \dx
\end{equation}
with $p\in (0,1)$
is the $L^p$-pseudo-norm of $u$, which is a concave and nonsmooth function.
Here, $\Omega\subset\R^d$ is a Lipschitz domain, $\alpha>0$, $\beta>0$.
Moreover, we will work with $s\in (0,1)$, where $s$ is chosen small to
facilitate discontinuous solutions.

The main motivation of this work comes from sparse control problems: we want to find controls with small support.
This can be useful in actuator placement problems, where one wants to identify small subsets of $\Omega$, where actuators needs to be placed.
The functionals of type \eqref{eq_lpnorm} are known to be sparsity promoting: solutions tend to be zero on certain parts of the domain.
Another motivation is the study of sparse source or coefficient identification problems, see, e.g. \cite{AsMannRosch2015,FanJiaoLuSun2016,JinMaass2012}.
In order to avoid over-smoothing, we study the regularization in $H^s(\Omega)$ for small $s\in (0,1)$.

If problem \eqref{eq_problem} would be set in $L^2(\Omega)$ instead of $H^s(\Omega)$, $s>0$, then it is impossible
to prove existence of solutions \cite{ItoKunisch2014}. In fact, following the construction in \cite{Wachsmuth2019},
one can construct problems of type \eqref{eq_problem} that do not attain their infimum on $L^2(\Omega)$.
The situation changes on $H^s(\Omega)$, $s>0$. Due to the compactness of the embedding $H^s(\Omega) \hookrightarrow L^2(\Omega)$, the
map $u\mapsto \|u\|_p^p$ is weakly continuous from $H^s(\Omega)$ to $\R$, which enables the standard existence proof, see \cref{thm_exist_sol}.

Since first introduced in \cite{HAntil_SBartels_2017a} for image denoising and phase-field
models, the use of $H^s(\Omega)$-norm as a regularizer, especially in the imaging community,
has seen a significant growth.
The use of functionals of type \eqref{eq_lpnorm} in imaging problems set on sequence spaces
has resulted in a rich literature. We refer only to \cite{ItoKunisch2014lp,Nikolova2005,NikolovaNgZhangChing2008}.
There it was proven that solutions are sparse: based on optimality conditions one can prove that entries $u_i$ of a solution $u\in \ell^2$ are either zero or have absolute value larger than some given number.

In the context of optimal control, the research on problems with functionals of type \eqref{eq_lpnorm} was initiated in \cite{ItoKunisch2014}.
There, for problems set on $L^2(\Omega)$, an optimality condition in form of the Pontryagin maximum principle was proven, which
can be used to prove sparsity of solutions. In addition, the regularization of these problems in $H^1(\Omega)$ was suggested.

In order to solve problems of type \eqref{eq_lpnorm} a monotone algorithm for a smoothing of \eqref{eq_lpnorm} was introduced in \cite{ItoKunisch2014,ItoKunisch2014lp}.
For $\epsilon>0$, one defines
\[
 \psi_{\epsilon}(t) = \begin{cases} \frac p2 \frac t{\epsilon^{2-p}} + (1-\frac p2)\epsilon^p & \text{ if } t\in [0,\epsilon^2),\\
                       t^{p/2} &\text{ if } t \ge \epsilon^2.
                      \end{cases}
\]
Then a smooth version of \eqref{eq_lpnorm} is given by $u\mapsto \int_\Omega \psi_\epsilon(u(x)^2)\dx$, see also \cite{SongBabuPalomar2015}.
It is proven in \cite{ItoKunisch2014,ItoKunisch2014lp} that for quadratic $F$ and fixed $\epsilon>0$ the weak limits of the iterates of the algorithm
solve the necessary condition of the smoothed problem.
In this paper, we extend this idea to allow for a decreasing sequence of smoothing parameters $\epsilon_k \searrow0$.
Still we can prove that weak limit points of iterates satisfy a certain optimality system.

In addition, we prove necessary conditions of first order for \eqref{eq_problem}. Here, we use a standard penalization technique together with the smoothing procedure
outlined above. Doing so, we are able to prove that for every local solution $\bar u \in H^s(\Omega)$
there is $\bar\lambda \in (H^s(\Omega))^*$ such that the system
\begin{align*}
\alpha (-\Delta)^s \bar u + \beta\bar\lambda & = -F'(\bar u)\\
\langle \bar\lambda, \bar u\rangle_{(H^s(\Omega))^*, H^s(\Omega)}  & =  p \int_\Omega |\bar u|^p \dx
\end{align*}
is satisfied.
For the precise statement of the result, we refer to \cref{thm_opt_cond}.
Let us mention that this result is new, even in the case  $s=1$.

Weak limit points of iterates of our algorithm, see \cref{alg} in \cref{sec_algorithm}, are proven to satisfy
for some $\bar\lambda \in (H^s(\Omega))^*$ the weaker system
\begin{align*}
\alpha (-\Delta)^s \bar u + \beta\bar\lambda & = -F'(\bar u)\\
\langle \bar\lambda, \bar u\rangle_{(H^s(\Omega))^*, H^s(\Omega)}  & \ge  p \int_\Omega |\bar u|^p \dx,
\end{align*}
see \cref{thm_optcon_lim_iter}.

The plan of the paper is as follows. We will work in an abstract framework with some Hilbert space $V$, where we have in mind to use $V=H^s(\Omega)$.
The proof of existence of solutions of \eqref{eq_problem} is given in \cref{sec_existence}.
The smoothing approach is described in \cref{sec_smoothing},
which is then used in \cref{sec_optimality} to obtain the first-order necessary optimality condition.
We comment on the use of the fractional Hilbert spaces $H^s(\Omega)$ and the possible realizations of fractional Laplace operators $(-\Delta)^s$ in \cref{sec_fractional}.
The optimization method is analyzed in \cref{sec_algorithm}.

\subsection*{Notation and assumptions}

We will consider the problem in a more general framework that includes $V=H^s(\Omega)$ and $V=H^1(\Omega)$.

\begin{assumption}[Standing assumption] \phantom{\quad}
	\label{ass_standing}

\begin{enumerate}
\item
\label{ass1_V}
$V$ is a real Hilbert space, $V\subset L^2(\Omega)$ with compact and dense embedding $V\hookrightarrow L^2(\Omega)$.
The inner product of $V$ is denoted by $\langle\cdot,\cdot\rangle_V$.
The duality product between $V^*$ and $V$ is denoted by  $\langle\cdot,\cdot\rangle_{V^*,V}$.

	\item $F:V \to \R$ is weakly lower semicontinuous, and bounded below by an affine function, i.e., there is $g\in V^*$, $c\in \R$ such that $F(u) \ge \langle g,u\rangle_{V^*,V} + c$
	for all $u\in V$.

	\item $F$ is continuously Fr\'echet differentiable %(do we need the stronger condition: from $L^2(\Omega)$ to $\R$?)
\end{enumerate}
\end{assumption}

We will employ the usual Hilbert (or Gelfand) triple setting $V \hookrightarrow L^2(\Omega) = L^2(\Omega)^* \hookrightarrow V^*$.

\section{Application to a sparse source identification problem}

Before investigating the abstract problem in depth, let us introduce one possible application of the sparse optimization problem.
Here, we will look into identification of sparse perturbations of the initial condition in a parabolic partial differential equation.
This is motivated by the question of detecting the source of, e.g., environmental pollution \cite{BieglerGhattasHeinkenschlossVanBloemenWaanders2003}.
The identification problem now reads:
Given some measurement $z$ of the state $y(T)$ at the terminal time $T>0$ determine the perturbation $u$ in the initial condition $y(0)=y_0 + u$.
We formulate this as the following minimization problem:
\[
 \min \frac 12 \|y(T)-z\|_{L^2(\Omega)} + \frac\alpha2 \|u\|_{H^s(\Omega)}^2 + \beta  \|u\|_p^p
\]
subject to
\[
 y_t - \operatorname{div}(a \nabla y) + f(y) = 0 \quad \text{ in } (0,T) \times \Omega,
\]
\[
 y |_{(0,T) \times \partial \Omega}=0,
\]
and
\[
 y(0)= y_0 + u.
\]
Here, $\Omega\subset \R^d$, $d=2,3$, is a bounded domain, $a\in L^\infty(\Omega)$ is a positive diffusivity coefficient, and
$f: \R\to \R$ is a smooth function such that $f'$ is bounded from below.
Here, $a$ and $f$ are assumed to be known.
Let us denote $F(u):= \frac 12 \|y(T)-z\|_{L^2(\Omega)}$. Then one can show that $F$ satisfies \cref{ass_standing} for $V=H^s(\Omega)$ with $s\ge0$, see, e.g., \cite{Troltzsch2010}
and the recent contribution \cite{CasasWachsmuth2022}.
Similar problems were investigated in \cite{CasasVexlerZuazua2015,LeykekhmanVexlerWalter2020}, where the unknown $u$ is a measure.
Following the considerations in \cite{Grasmair2009,GrasmairHaltmeierScherzer2008} one can consider noisy measurements $z^\delta$ with $\|z-z^\delta\|_{L^2(\Omega)} \le \delta$,
where $z$ denotes the unavailable exact data, and prove convergence of solutions of the problem above for $(\alpha,\beta,\delta)\to(0,0,0)$.

\section{Existence of solutions}
\label{sec_existence}

Let us first prove existence of solutions for the nonsmooth optimization problem

\begin{equation}\label{eq001}
\min_{u\in V} F(u) + \frac\alpha2 \|u\|_V^2 + \beta  \|u\|_p^p.
\end{equation}

\begin{theorem}\label{thm_exist_sol}
	The optimization problem \eqref{eq001} is solvable.
\end{theorem}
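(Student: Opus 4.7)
The plan is to apply the direct method of the calculus of variations. Let $J(u) := F(u) + \frac{\alpha}{2}\|u\|_V^2 + \beta\|u\|_p^p$. First I would verify that $J$ is bounded below and coercive on $V$. From \cref{ass_standing}, the assumption $F(u) \ge \langle g,u\rangle_{V^*,V} + c$ together with $\|u\|_p^p \ge 0$ and Young's inequality applied to the linear term yields
\[
J(u) \ge \frac{\alpha}{4}\|u\|_V^2 - \frac{1}{\alpha}\|g\|_{V^*}^2 + c,
\]
so $J$ is bounded below and any minimizing sequence $(u_n)\subset V$ is bounded in $V$.

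Next I would extract a subsequence (not relabeled) with $u_n \rightharpoonup \bar u$ in $V$. By the compact embedding $V\hookrightarrow L^2(\Omega)$ from \cref{ass_standing}\eqref{ass1_V}, $u_n \to \bar u$ strongly in $L^2(\Omega)$, and a further subsequence converges a.e.\ to $\bar u$ on $\Omega$.

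The main point is passing to the limit in the nonconvex, nonsmooth term $\|u_n\|_p^p$. The key observation is that $p\in(0,1)$ gives the pointwise bound $|t|^p \le 1 + t^2$, so that the sequence $(|u_n|^p)$ is dominated by the $L^1$-convergent sequence $1 + |u_n|^2$. Since $u_n \to \bar u$ in $L^2(\Omega)$, we have $\int_\Omega (1+u_n^2)\,dx \to \int_\Omega (1+\bar u^2)\,dx$, and a generalized dominated convergence theorem (or equivalently Vitali's theorem, noting that $(|u_n|^p)$ is bounded in $L^{2/p}(\Omega)$ with $2/p>1$ and hence uniformly integrable) yields
\[
\int_\Omega |u_n|^p\,dx \;\longrightarrow\; \int_\Omega |\bar u|^p\,dx.
\]
The remaining terms are standard: $F$ is weakly lower semicontinuous by \cref{ass_standing}, and $u\mapsto \|u\|_V^2$ is weakly lower semicontinuous as the square of a Hilbert space norm.

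Combining these, $\liminf_n J(u_n) \ge J(\bar u)$, which together with $(u_n)$ being a minimizing sequence shows that $\bar u$ is a minimizer. The only nonstandard step is the weak continuity of $u\mapsto \|u\|_p^p$; the concavity and nonsmoothness prevent a convex lower-semicontinuity argument, but the compact embedding into $L^2(\Omega)$ together with the uniform integrability of $(|u_n|^p)$ resolves this.
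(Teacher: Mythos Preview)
Your proof is correct and follows essentially the same approach as the paper: the direct method of the calculus of variations, using the compact embedding $V\hookrightarrow L^2(\Omega)$ to upgrade weak convergence in $V$ to strong convergence in $L^2(\Omega)$, which is precisely what is needed to handle the nonconvex term $\|u\|_p^p$. The paper's proof is very terse and simply says ``passing to the limit (limit inferior) in the functional''; your version spells out the one nonstandard step---the continuity of $u\mapsto\int_\Omega|u|^p\,\mathrm{d}x$ along $L^2$-convergent sequences via the bound $|t|^p\le 1+t^2$ and Vitali/generalized dominated convergence---which is exactly the mechanism the paper has in mind (cf.\ \cref{lem_conv_psi_eps} with $\epsilon=0$).
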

\begin{proof}
The proof follows by standard arguments. Let $(u_n)$ be a minimizing sequence. Due to \cref{ass_standing}, $(u_n)$ is bounded in $V$. Hence, we have (after extracting
a subsequence if necessary) $u_n \rightharpoonup\bar u$ in $V$ and  $u_n \rightharpoonup\bar u$ in $L^2(\Omega)$.
Passing to the limit (limit inferior) in the functional shows that $\bar u$ realizes the minimum.
\end{proof}

\begin{remark}\label{rem_origin_local_sol}
The origin $u_0=0$ is a minimum of \eqref{eq001} along lines. In fact, let $u\in V$. Then for $t>0$ small we have
\[
 \left( F(tu) + \frac\alpha2 \|tu\|_V^2 + \beta  \|tu\|_p^p \right) - F(0) =  t F'(0)u + o(t) + t^2 \frac\alpha2 \|u\|_V^2 + \beta t^p \|u\|_p^p,
\]
which implies
\[
 \lim_{t\searrow0}\frac1{t^p} \left(\left( F(tu) + \frac\alpha2 \|tu\|_V^2 + \beta  \|tu\|_p^p \right) - F(0) \right) =  \beta  \|u\|_p^p >0.
\]
Hence, the function $t\mapsto F(tu)$ has a local minimum at $t=0$.

A stronger claim holds true for minimization problems on $\mathbb R^n$ involving the $l^p$-norm
of vectors: there the origin is a local solution,
which is due to the inequality $\sum_{i=1}^n |x_i|^p \ge (\sum_{i=1}^n |x_i|)^p$ for $p\in (0,1)$ and $x\in \R^n$.
\end{remark}

\section{Smoothing scheme}
\label{sec_smoothing}

In order to prove necessary optimality conditions and to devise an optimization algorithm,
we will use a smoothing scheme, which was already employed in \cite{ItoKunisch2014,SongBabuPalomar2015}.
Let $\epsilon\ge0$.
Then we will work with the following smooth approximation of $t\mapsto t^{p/2}$ defined by
\[
 \psi_{\epsilon}(t) = \begin{cases} \frac p2 \frac t{\epsilon^{2-p}} + (1-\frac p2)\epsilon^p & \text{ if } t\in [0,\epsilon^2),\\
                       t^{p/2} &\text{ if } t \ge \epsilon^2
                      \end{cases}
\]
with derivative given by
\[
 \psi'_{\epsilon}(t) = \frac p2 \min(\epsilon^{p-2},t^{\frac{p-2}2}).
\]
Note that $\psi_0(u^2) = |u|^p$.
In addition, we have the following properties of $\psi_\epsilon$.
\begin{lemma}\label{lem_psi_properties}
Let $p\in(0,2)$, $\epsilon>0$ be given. Then $\psi_\epsilon$ satisfies the following:
\begin{enumerate}
	\item \label{item_prop_psi_1}  $\psi_{\epsilon} : [0,\infty) \to [0,\infty)$ is continuously differentiable with $0\le \psi_\epsilon' \le \frac p2\epsilon^{p-2}$,
	\item \label{item_prop_psi_2}  $\psi_{\epsilon}$ is concave,
	\item \label{item_prop_psi_3}  $|t|^p \le \psi_\epsilon(t^2) \le |t|^p + (1-\frac p2)\epsilon^p$ for all $t\in \R$.
	\item \label{item_prop_psi_4}  $\epsilon \mapsto \psi_{\epsilon}(t) $ is monotonically increasing for all $t\ge0$.
\end{enumerate}
\end{lemma}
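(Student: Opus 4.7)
The plan is to treat the four items in the order given, using only the explicit piecewise definition of $\psi_\epsilon$ together with the formula for $\psi_\epsilon'$ stated in the text.

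For item \eqref{item_prop_psi_1}, I would first verify continuity at the joining point $t=\epsilon^2$: the linear branch gives $\tfrac{p}{2}\epsilon^p+(1-\tfrac{p}{2})\epsilon^p=\epsilon^p$, which equals $(\epsilon^2)^{p/2}$. The same matching occurs for the one-sided derivatives, since the left derivative is the constant $\tfrac{p}{2}\epsilon^{p-2}$ and the right derivative at $\epsilon^2$ is $\tfrac{p}{2}(\epsilon^2)^{(p-2)/2}=\tfrac{p}{2}\epsilon^{p-2}$. This justifies the closed-form $\psi_\epsilon'(t)=\tfrac{p}{2}\min(\epsilon^{p-2},t^{(p-2)/2})$, and because $p\in(0,2)$ both arguments of the minimum are positive, the bounds $0\le\psi_\epsilon'\le \tfrac{p}{2}\epsilon^{p-2}$ follow immediately. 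Continuous differentiability on $(0,\infty)$ is clear on each branch and the match at $\epsilon^2$ covers the junction.

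Item \eqref{item_prop_psi_2} is essentially a corollary of the formula for $\psi_\epsilon'$: since $p-2<0$, the function $t\mapsto t^{(p-2)/2}$ is non-increasing on $(0,\infty)$, and hence so is $t\mapsto\min(\epsilon^{p-2},t^{(p-2)/2})$. A non-increasing derivative yields concavity.

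For item \eqref{item_prop_psi_3} I would split according to whether $|t|\ge\epsilon$ or $|t|<\epsilon$. When $|t|\ge\epsilon$ we have $t^2\ge\epsilon^2$ and $\psi_\epsilon(t^2)=|t|^p$, so both inequalities hold trivially (the upper one even with slack). When $|t|<\epsilon$, the upper bound reduces to showing $\tfrac{p}{2}t^2\epsilon^{p-2}\le|t|^p$; this is just $\tfrac{p}{2}(|t|/\epsilon)^{2-p}\le 1$, valid because $|t|/\epsilon<1$ and $p/2<1$. The lower bound is the one place I expect a small but not trivial idea: the linear branch of $\psi_\epsilon$ agrees with $t\mapsto t^{p/2}$ in value and derivative at the point $t=\epsilon^2$, so it is the tangent line to $t^{p/2}$ at $\epsilon^2$. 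Since $s\mapsto s^{p/2}$ is concave on $[0,\infty)$ (as $p/2\in(0,1)$), its tangent at any interior point lies above the graph; evaluating at $s=t^2\le\epsilon^2$ yields $\psi_\epsilon(t^2)\ge(t^2)^{p/2}=|t|^p$.

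Finally, for item \eqref{item_prop_psi_4}, fix $t\ge 0$. If $\epsilon\le\sqrt{t}$ then $\psi_\epsilon(t)=t^{p/2}$ is independent of $\epsilon$. If $\epsilon>\sqrt{t}$, differentiating the linear branch in $\epsilon$ gives
\[
\frac{\partial}{\partial\epsilon}\psi_\epsilon(t)=\frac{p(p-2)}{2}\,t\,\epsilon^{p-3}+p\Bigl(1-\tfrac{p}{2}\Bigr)\epsilon^{p-1}=\frac{p(2-p)}{2}\,\epsilon^{p-3}(\epsilon^2-t),
\]
which is nonnegative because $p\in(0,2)$ and $\epsilon^2>t$ in this regime. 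Together with the continuity at $\epsilon=\sqrt{t}$ established in \eqref{item_prop_psi_1}, this shows $\epsilon\mapsto\psi_\epsilon(t)$ is non-decreasing on $[0,\infty)$. The only mildly subtle point in the whole argument is the tangent-line observation in item \eqref{item_prop_psi_3}; everything else is a direct calculation.
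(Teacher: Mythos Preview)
Your proof is correct. Items \ref{item_prop_psi_1}--\ref{item_prop_psi_3} match the paper's approach: the paper dismisses \ref{item_prop_psi_1}, \ref{item_prop_psi_2}, and the upper bound in \ref{item_prop_psi_3} as elementary calculations, and for the lower bound in \ref{item_prop_psi_3} it uses exactly the tangent-line/concavity argument you describe.

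For item \ref{item_prop_psi_4} you take a genuinely different route. The paper fixes $0<\epsilon_1<\epsilon_2$ and compares $\psi_{\epsilon_1}(t)$ and $\psi_{\epsilon_2}(t)$ by a three-case analysis in $t$ (namely $t\ge\epsilon_2^2$, $t\in[\epsilon_1^2,\epsilon_2^2]$, and $t\in(0,\epsilon_1^2)$), invoking the concavity of $t\mapsto t^{p/2}$ in the middle range and a slope comparison of affine pieces in the last range. Your approach instead differentiates $\epsilon\mapsto\psi_\epsilon(t)$ directly and factors the derivative as $\tfrac{p(2-p)}{2}\epsilon^{p-3}(\epsilon^2-t)$, which makes the sign immediate. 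Your argument is shorter and avoids the case distinction; the paper's argument has the minor advantage of not requiring differentiability in $\epsilon$ and of making the geometric picture (tangent lines to a concave graph) explicit. One small nit: the continuity at $\epsilon=\sqrt t$ you invoke is continuity in the $\epsilon$-variable, which strictly speaking is not the content of \ref{item_prop_psi_1} (that item concerns regularity in $t$), but it follows from the same matching of the two branches at $t=\epsilon^2$ that you already verified there.
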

\begin{proof}
Claims \ref{item_prop_psi_1}, \ref{item_prop_psi_2}, and the second inequality of \ref{item_prop_psi_3}  can be verified by elementary calculations.
Let us prove the first inequality of \ref{item_prop_psi_3}. Due to the concavity of $t\mapsto t^{p/2}$, we obtain for $|t| \le \epsilon$
\[
(t^2)^{\frac p2} \le (\epsilon^2)^{\frac p2}  + \frac p2 (\epsilon^2)^{\frac p2-1} ( t^2 - \epsilon^2) = \psi_\epsilon(t^2),
\]
which is the claim. In order to prove claim \ref{item_prop_psi_4} let $0<\epsilon_1<\epsilon_2$ and $t\ge0$. Then $\psi_{\epsilon_1}(t) \le \psi_{\epsilon_2}(t)$
is trivial for $t\ge\epsilon_2^2$.
On the interval $[\epsilon_1^2,\epsilon_2^2]$,
the function $\psi_{\epsilon_2}$ is affine linear and tangent to $\psi_{\epsilon_1}$, hence the claim follows by concavity of $t\mapsto t^{p/2}$.
On the interval $(0,\epsilon_1^2)$ both functions are affine linear with $\psi_{\epsilon_2}' < \psi_{\epsilon_1}'$.
In addition, we have $\psi_{\epsilon_2}(\epsilon_1^2) \ge \psi_{\epsilon_1}(\epsilon_1^2)$,
which implies $\psi_{\epsilon_1}(t) \le \psi_{\epsilon_2}(t)$ for $t\in (0,\epsilon_1^2)$.
\end{proof}

Let us define the following integral function associated to $\psi_{\epsilon}$
\[
 G_\epsilon(u):= \int_\Omega \psi_{\epsilon}(|u|^2)\dx,
\]
which serves as an approximation of $\int_\Omega |u|^p \dx$.

\begin{lemma}\label{lem_conv_psi_eps}
	Let $(u_k)$ and $(\epsilon_k)$ be sequences such that $u_k \to u$ in $L^1(\Omega)$  and $\epsilon_k\to \epsilon\ge0$.
	Then $G_{\epsilon_k}(u_k) \to G_\epsilon(u)$.
\end{lemma}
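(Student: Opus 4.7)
The plan is to use Lebesgue's dominated convergence theorem, passing to a subsequence to obtain a.e.\ convergence and a pointwise $L^1$ majorant, then to recover convergence along the full sequence via the standard subsequence-of-subsequences trick.

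First, I would observe that the map $(\tau,t) \mapsto \psi_\tau(t)$ is jointly continuous on $[0,\infty)\times[0,\infty)$. Continuity for $\tau > 0$ is immediate from the piecewise definition. At $\tau = 0$ one has $\psi_0(t) = t^{p/2}$; for any fixed $t > 0$, once $\tau$ is small enough that $\tau^2 < t$, $\psi_\tau(t) = t^{p/2}$, while at $t = 0$, $\psi_\tau(0) = (1-p/2)\tau^p \to 0$. Together with continuity of $s \mapsto s^2$, this gives pointwise convergence $\psi_{\epsilon_k}(|u_k(x)|^2) \to \psi_\epsilon(|u(x)|^2)$ for almost every $x$, provided $u_k \to u$ a.e.

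Next, since $u_k \to u$ in $L^1(\Omega)$, a standard argument (based on a fast subsequence with $\sum_j \|u_{k_{j+1}}-u_{k_j}\|_{L^1} < \infty$) yields a subsequence, still denoted $(u_k)$, together with a function $h \in L^1(\Omega)$ such that $u_k \to u$ pointwise a.e.\ and $|u_k| \le h$ a.e. Because $\Omega$ is bounded and $p \in (0,1)$, we have $h^p \le 1 + h \in L^1(\Omega)$. Using \cref{lem_psi_properties}\ref{item_prop_psi_3} and the boundedness of the sequence $(\epsilon_k)$, we obtain the uniform majorant
\[
\psi_{\epsilon_k}(|u_k(x)|^2) \le |u_k(x)|^p + \bigl(1-\tfrac p2\bigr)\epsilon_k^p \le h(x)^p + C,
\]
which lies in $L^1(\Omega)$. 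Lebesgue's dominated convergence theorem then gives $G_{\epsilon_k}(u_k)\to G_\epsilon(u)$ along this subsequence.

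Finally, I would upgrade this subsequential convergence to convergence of the original sequence by the usual argument: any subsequence of $(u_k)$ again converges to $u$ in $L^1$, so the above reasoning applies to extract a further subsequence along which $G_{\epsilon_{k}}(u_{k})\to G_\epsilon(u)$; since every subsequence has a sub-subsequence with the same limit, the entire sequence converges. The only mildly technical step is the construction of the dominating $h$, which is a textbook consequence of completeness of $L^1(\Omega)$; no deeper obstacle arises.
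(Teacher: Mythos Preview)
Your argument is correct. The paper itself does not give a proof but simply cites \cite[Lemma 5.1 and (5.3)]{ItoKunisch2014}, so you have supplied a self-contained dominated-convergence argument where the paper defers to the literature; the underlying mechanism (pointwise convergence plus an $L^1$ majorant built from the estimate in \cref{lem_psi_properties}\ref{item_prop_psi_3}) is the natural one and almost certainly what the cited reference does as well.
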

\begin{proof}
	This is \cite[Lemma 5.1 and (5.3)]{ItoKunisch2014}.
\end{proof}

\begin{lemma}\label{lem_diff_Geps}
	Let $\epsilon>0$. Then $u\mapsto G_\epsilon(u)$ is Fr\'echet differentiable from $L^2(\Omega)$ to $\R$, with derivative given by
	\[
		G_\epsilon'(u)h = \int_\Omega 2u(x)\cdot \psi_\epsilon'(u(x)^2) \cdot h(x)\ \dx.
	\]
\end{lemma}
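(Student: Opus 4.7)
The plan is to recognize this as a Nemytskii-operator differentiability statement and reduce it, by Taylor's theorem, to a pointwise Lipschitz property of the profile $t\mapsto 2t\,\psi_\epsilon'(t^2)$. Write
\[
g_\epsilon(t) := 2t\,\psi_\epsilon'(t^2) = \begin{cases} p\,\epsilon^{p-2}\, t, & |t|\le \epsilon,\\ p\,\operatorname{sgn}(t)\,|t|^{p-1}, & |t|\ge \epsilon,\end{cases}
\]
so that the chain rule gives $\frac{d}{dt}\psi_\epsilon(t^2) = g_\epsilon(t)$ and the candidate derivative reads $G_\epsilon'(u)h = \int_\Omega g_\epsilon(u)h\dx$.

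First I would verify that this candidate defines a bounded linear functional on $L^2(\Omega)$. Using \cref{lem_psi_properties}\ref{item_prop_psi_1}, $|g_\epsilon(t)|\le p\,\epsilon^{p-2}|t|$, hence $g_\epsilon(u)\in L^2(\Omega)$ with $\|g_\epsilon(u)\|_{L^2}\le p\,\epsilon^{p-2}\|u\|_{L^2}$, so $G_\epsilon'(u)\in (L^2(\Omega))^*$. Next, I would check that $g_\epsilon$ is globally Lipschitz on $\R$ with constant $L_\epsilon := p\,\epsilon^{p-2}$. On $|t|<\epsilon$ the function is linear with slope $p\,\epsilon^{p-2}$; on $|t|>\epsilon$ one has $g_\epsilon'(t)=p(p-1)|t|^{p-2}$, whose modulus is bounded by $p\,\epsilon^{p-2}$ for $p\in(0,1)$; continuity at $|t|=\epsilon$ is immediate from the matching values $\pm p\,\epsilon^{p-1}$. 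The Lipschitz constant on $\R$ is therefore $L_\epsilon$.

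With the Lipschitz bound in hand, I would apply Taylor's theorem with integral remainder to the scalar map $t\mapsto\psi_\epsilon(t^2)$ to write, pointwise a.e.,
\[
\psi_\epsilon\bigl((u+h)^2\bigr) - \psi_\epsilon(u^2) - g_\epsilon(u)\,h = \int_0^1 \bigl[g_\epsilon(u+\theta h)-g_\epsilon(u)\bigr]\,h\,d\theta.
\]
Integrating over $\Omega$, exchanging integrals via Fubini (the integrand is bounded by $L_\epsilon|h|^2\in L^1(\Omega)$), and using the Lipschitz bound yields
\[
\bigl|G_\epsilon(u+h)-G_\epsilon(u)-G_\epsilon'(u)h\bigr| \le \frac{L_\epsilon}{2}\,\|h\|_{L^2(\Omega)}^2,
\]
which is $o(\|h\|_{L^2})$ as $h\to 0$ in $L^2(\Omega)$. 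This establishes Fréchet differentiability and the stated form of the derivative.

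The only subtle point I anticipate is the Lipschitz estimate for $g_\epsilon$ across the gluing point $|t|=\epsilon$; one must either argue piecewise and match at the boundary or invoke the concavity of $t\mapsto t^{p/2}$ (\cref{lem_psi_properties}\ref{item_prop_psi_2}) to get monotonicity of $\psi_\epsilon'$ and hence a one-sided control. Once this is in place, everything else is routine integral-remainder bookkeeping; no measure-theoretic issues arise because the pointwise bound $L_\epsilon|h|^2$ is integrable by $h\in L^2(\Omega)$.
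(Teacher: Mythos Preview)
Your proof is correct but follows a different route from the paper. The paper argues abstractly via the general Fr\'echet differentiability criterion for Nemytskii operators from Appell--Zabrejko: it checks that $u\mapsto \psi_\epsilon(u^2)$ is a continuous superposition operator $L^2(\Omega)\to L^{1/p}(\Omega)$ (via the growth estimate in \cref{lem_psi_properties}\ref{item_prop_psi_3}) and that the candidate derivative $u\mapsto 2u\,\psi_\epsilon'(u^2)$ is a continuous superposition operator $L^2(\Omega)\to L^2(\Omega)$ (via the bound on $\psi_\epsilon'$ in \cref{lem_psi_properties}\ref{item_prop_psi_1}), and then invokes \cite[Thm.~3.12]{appellzabrejko}. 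Your approach is more elementary and entirely self-contained: you exploit the explicit piecewise form of $g_\epsilon$ to show it is globally Lipschitz with constant $p\,\epsilon^{p-2}$, and then bound the remainder directly by the integral mean-value formula. This yields the quantitative estimate $|G_\epsilon(u+h)-G_\epsilon(u)-G_\epsilon'(u)h|\le \tfrac{p}{2}\epsilon^{p-2}\|h\|_{L^2}^2$, so you actually prove more---$G_\epsilon$ has a globally Lipschitz Fr\'echet derivative---whereas the paper's argument gives only $C^1$. The abstract route, in turn, would survive if $\psi_\epsilon$ were merely $C^1$ with suitable growth but without the Lipschitz derivative, so it is more robust under perturbations of the smoothing profile. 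A minor terminological remark: what you call ``Taylor's theorem with integral remainder'' is really just the fundamental theorem of calculus applied to $\theta\mapsto \psi_\epsilon((u+\theta h)^2)$, which only needs $C^1$; this is fine, since $g_\epsilon$ is continuous but not everywhere differentiable.
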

\begin{proof}
	Fix $\epsilon>0$.
	Let us define the Nemyzki (or superposition) operator $\Psi_\epsilon(u)$  by $\Psi_\epsilon(u)(x) := \psi_\epsilon(u(x)^2)$.
	Due to the growth estimate of \cref{lem_psi_properties}, $\Psi_\epsilon$ is a mapping from $L^2(\Omega)$ to $L^{1/p}(\Omega)$,
	hence continuous by \cite[Thm.\@ 3.1]{appellzabrejko}. Note that $1/p>1$.
	Due to the boundedness of $\psi_\epsilon'$, the Nemyzki operator induced by $u\mapsto 2u\cdot \psi_\epsilon'(u^2)$ is continuous from $L^2(\Omega)$ to $L^2(\Omega)$.
	This implies the Fr\'echet differentiability of $\Psi_\epsilon$ from $L^2(\Omega)$ to $L^1(\Omega)$, see \cite[Thm.\@ 3.12]{appellzabrejko},
	and the claim follows.
\end{proof}

\section{Optimality conditions}
\label{sec_optimality}

Here, we will now prove optimality conditions for the nonsmooth problem \eqref{eq001}.
To this end, we will introduce an auxiliary smooth optimization problem.
For a general exposition of this method, we refer to \cite[Section 3.2.2]{Barbu1993}.
Given $\epsilon \ge0$, define
\begin{equation}\label{eq_def_Phi_eps}
	\Phi_\epsilon(u) := F(u) + \frac\alpha2 \|u\|_{V}^2 + \beta G_\epsilon(u).
\end{equation}
Let now $\bar u \in V$ be a local solution of the original problem \eqref{eq001}. Then there is $\rho>0$ such that
\[
\Phi_0(\bar u) \le \Phi_0(u)
\]
for all $u$ with $\|u-\bar u \|_{V} \le \rho$.
For $\epsilon>0$ define the auxiliary problem:
\begin{equation}\label{eq_aux_Phi_eps}
 \min \Phi_\epsilon(u) + \frac12 \|u-\bar u\|_{{\color{red} L^2(\Omega)}}^2 \quad \text{ subject to } \|u-\bar u \|_{V} \le \rho.
\end{equation}
Arguing as in the proof of \cref{thm_exist_sol}, one can prove that
\eqref{eq_aux_Phi_eps} is solvable.
We will now show that solutions to this problem converge for $\epsilon\searrow0$ to solutions of \eqref{eq001}.
\begin{lemma}\label{lem_conv_aux_solutions}
Let $(\epsilon_k)$ be a sequence of positive numbers with $\epsilon_k\to 0$.
Let $(u_k)$ be a family of global solutions of \eqref{eq_aux_Phi_eps} to the smoothing parameter $\epsilon_k$.
Then $u_k \to \bar u$ in $V$.
\end{lemma}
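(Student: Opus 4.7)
The plan is to use $\bar u$ itself as a feasible comparison element in the auxiliary problem and then exploit the compact embedding $V\hookrightarrow L^2(\Omega)$ together with \cref{lem_conv_psi_eps}. Since $\bar u$ satisfies $\|\bar u - \bar u\|_V = 0 \le \rho$, the optimality of $u_k$ yields
\[
\Phi_{\epsilon_k}(u_k) + \tfrac12 \|u_k - \bar u\|_{L^2(\Omega)}^2 \le \Phi_{\epsilon_k}(\bar u).
\]
By \cref{lem_conv_psi_eps} applied to the constant sequence $\bar u$, we have $\Phi_{\epsilon_k}(\bar u) \to \Phi_0(\bar u)$, so the right-hand side is bounded. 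Together with the constraint $\|u_k - \bar u\|_V \le \rho$, this shows that $(u_k)$ is bounded in $V$. Thus, after passing to a subsequence, $u_k \rightharpoonup u^*$ in $V$ and, by the compact embedding $V\hookrightarrow L^2(\Omega)$, $u_k \to u^*$ in $L^2(\Omega)$ and in $L^1(\Omega)$.

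Next, I would pass to the $\liminf$ in the displayed inequality. The functional $F$ and the squared norm $\|\cdot\|_V^2$ are weakly lower semicontinuous, $u\mapsto \tfrac12 \|u - \bar u\|_{L^2}^2$ is continuous along the strongly $L^2$-convergent subsequence, and \cref{lem_conv_psi_eps} gives $G_{\epsilon_k}(u_k) \to G_0(u^*) = \int_\Omega |u^*|^p \dx$. Combining these,
\[
\Phi_0(u^*) + \tfrac12 \|u^* - \bar u\|_{L^2(\Omega)}^2 \le \liminf_{k\to\infty}\Bigl[\Phi_{\epsilon_k}(u_k) + \tfrac12 \|u_k - \bar u\|_{L^2(\Omega)}^2 \Bigr] \le \Phi_0(\bar u).
\]
Moreover, by weak lower semicontinuity of $\|\cdot\|_V$, $\|u^* - \bar u\|_V \le \rho$, so $u^*$ is admissible for the local optimality of $\bar u$, whence $\Phi_0(\bar u) \le \Phi_0(u^*)$. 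Plugging this in forces $\|u^* - \bar u\|_{L^2(\Omega)} = 0$, hence $u^* = \bar u$. A standard subsequence-subsequence argument then shows that the whole sequence $(u_k)$ converges weakly in $V$ to $\bar u$.

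The remaining step, which I expect to be the main subtlety, is upgrading weak convergence in $V$ to strong convergence. Going back through the chain of inequalities, every inequality must be an equality, so
\[
\lim_{k\to\infty} \Phi_{\epsilon_k}(u_k) = \Phi_0(\bar u),
\]
using also that $\|u_k - \bar u\|_{L^2(\Omega)}\to 0$. Since $G_{\epsilon_k}(u_k) \to G_0(\bar u)$, this reduces to
\[
\lim_{k\to\infty}\Bigl[F(u_k) + \tfrac\alpha2 \|u_k\|_V^2 \Bigr] = F(\bar u) + \tfrac\alpha2 \|\bar u\|_V^2.
\]
Both $F$ and $\tfrac\alpha2\|\cdot\|_V^2$ are weakly lower semicontinuous, so $\liminf F(u_k) \ge F(\bar u)$ and $\liminf \|u_k\|_V^2 \ge \|\bar u\|_V^2$. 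A simple argument (if one $\liminf$ were strict, the other would have to compensate, contradicting its own $\liminf$ bound) shows both liminfs are attained as limits separately. In particular, $\|u_k\|_V \to \|\bar u\|_V$. Combined with $u_k \rightharpoonup \bar u$ in the Hilbert space $V$, this yields $u_k \to \bar u$ strongly in $V$, finishing the proof.
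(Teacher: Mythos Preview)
Your proof is correct and follows essentially the same route as the paper's: compare with $\bar u$, extract a weakly convergent subsequence, use \cref{lem_conv_psi_eps} and weak lower semicontinuity to identify the limit as $\bar u$, and then upgrade to strong convergence by showing $\|u_k\|_V\to\|\bar u\|_V$. The only cosmetic difference is that the paper makes the separation of the two $\liminf$'s explicit via a $\limsup/\liminf$ sandwich, whereas you describe it informally; the underlying argument is the same.
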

\begin{proof}
The first part of the proof is similar to the one of \cite[Proposition 5.3]{ItoKunisch2014}.
By construction, $(u_k)$ is bounded in $V$
so that (after extracting a subsequence if necessary) we have
$u_k \rightharpoonup u^*$ in $V$ and $u_k\to u^*$ in $L^2(\Omega)$.
Then $G_{\epsilon_k}(u_k) \to G_0(u^*)$ and $G_{\epsilon_k}(\bar u) \to G_0(\bar u)$ by \cref{lem_conv_psi_eps},
which implies $\Phi_{\epsilon_k}(\bar u) \to  \Phi_0(\bar u) $ and $\liminf \Phi_{\epsilon_k}(u_k) \ge \Phi_0( u^*)$.
Taking the limit inferior in
\[
\Phi_{\epsilon_k}(\bar u) \ge \Phi_{\epsilon_k}(u_k) + \frac12 \|u_k-\bar u\|_{{\color{red} L^2(\Omega)}}^2
\]
leads to
\[
 \Phi_0(\bar u)  \ge \Phi_0( u^*) + \frac12 \|u^*-\bar u\|_{{\color{red} L^2(\Omega)}}^2
 \ge \Phi_0(\bar u),
\]
where the latter inequality follows from local optimality of $\bar u$. This proves $u^*=\bar u$.
From elementary properties of limit inferior and superior, we get
\[\begin{split}
  \Phi_0(\bar u)
  &\ge \limsup_{k\to\infty} \left( F(u_k) + \frac\alpha2 \|u_k\|_{V}^2 + \beta G_{\epsilon_k}(u_k)+ \frac12 \|u_k-\bar u\|_{{\color{red} L^2(\Omega)}}^2 \right) \\
  &\ge \liminf_{k\to\infty} \left( F(u_k) + \beta G_{\epsilon_k}(u_k) \right)+ { \color{red}\limsup_{k\to\infty}  \frac\alpha2 \|u_k\|_{V}^2 +  0 }\\
  &\ge \liminf_{k\to\infty} \left( F(u_k) + \beta G_{\epsilon_k}(u_k) \right)+ { \color{red}\liminf_{k\to\infty}  \frac\alpha2 \|u_k\|_{V}^2  }\\
  &\ge   \Phi_0(\bar u).
\end{split}\]
Hence,  {\color{red} $\|u_k\|_{V}^2\to\|\bar u\|_{V}^2$ and }$u_k\to \bar u$ in $V$.
Since all subsequences of $(u_k)$ contain subsequences converging strongly to $\bar u$, the convergence of the whole sequence follows.
\end{proof}

A particular implication of the previous result is that the constraint $\|u_\epsilon-\bar u \|_{V} \le \rho$ will be satisfied
as strict inequality if $\epsilon$ is sufficiently small.

\begin{lemma}\label{lem_optcond_Pauxrho}
 Let $u_\epsilon$ be a global solution of the auxiliary problem \eqref{eq_aux_Phi_eps} to $\epsilon>0$ with $\|u_\epsilon-\bar u\|_{V}<\rho$.
 Then $u_\epsilon$ satisfies
 \begin{equation}\label{eq_optcon_Pauxrho}
  \alpha \langle u_\epsilon, v\rangle_V  + \beta G_\epsilon'(u_\epsilon)v  + \langle u_\epsilon-\bar u, \, v\rangle_{{\color{red} L^2(\Omega)}}  = - F'(u_\epsilon)v \qquad \forall v\in V.
 \end{equation}
\end{lemma}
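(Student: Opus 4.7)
The plan is to exploit that the constraint $\|u-\bar u\|_V \le \rho$ is inactive at $u_\epsilon$, reducing \eqref{eq_aux_Phi_eps} locally to an unconstrained minimization over $V$, and then to collect the Fr\'echet derivatives of the four summands of the objective.

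First I would observe that, because $\|u_\epsilon - \bar u\|_V < \rho$, there exists a $V$-open ball around $u_\epsilon$ on which every admissible direction is feasible. Consequently, for each $v \in V$ the function $t \mapsto \Phi_\epsilon(u_\epsilon + tv) + \tfrac12 \|u_\epsilon + tv - \bar u\|_{L^2(\Omega)}^2$ is well-defined and minimized at $t = 0$ over some open interval containing $0$. Hence, if the objective is Fr\'echet differentiable at $u_\epsilon$ as a functional on $V$, its derivative must vanish there.

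Next I would check differentiability of each summand as a map from $V$ to $\R$. The term $F$ is continuously Fr\'echet differentiable by \cref{ass_standing}. The term $\tfrac{\alpha}{2}\|\cdot\|_V^2$ has derivative $v \mapsto \alpha\langle u_\epsilon, v\rangle_V$ by elementary Hilbert space calculus. For $\beta G_\epsilon$, \cref{lem_diff_Geps} gives Fr\'echet differentiability from $L^2(\Omega)$ to $\R$ with derivative $v \mapsto \beta G_\epsilon'(u_\epsilon)v$; composing with the continuous linear embedding $V\hookrightarrow L^2(\Omega)$ from \cref{ass_standing}\eqref{ass1_V} and invoking the chain rule yields Fr\'echet differentiability from $V$ to $\R$ with the same formula. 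Likewise $\tfrac12\|\cdot - \bar u\|_{L^2(\Omega)}^2$ is differentiable from $V$ to $\R$ with derivative $v \mapsto \langle u_\epsilon - \bar u, v\rangle_{L^2(\Omega)}$.

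Summing the four derivatives and equating to zero for every $v \in V$ yields
\[
F'(u_\epsilon)v + \alpha \langle u_\epsilon, v\rangle_V + \beta G_\epsilon'(u_\epsilon)v + \langle u_\epsilon - \bar u, v\rangle_{L^2(\Omega)} = 0,
\]
which, after moving $F'(u_\epsilon)v$ to the right-hand side, is exactly \eqref{eq_optcon_Pauxrho}. There is no real obstacle here; the only point that requires a moment of care is the transfer of differentiability of $G_\epsilon$ from $L^2(\Omega)$ to $V$ via the embedding, and the verification that strict feasibility of $u_\epsilon$ in the $V$-ball indeed permits variations in every direction $v \in V$ for sufficiently small $t$.
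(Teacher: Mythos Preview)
Your proof is correct and follows the same approach as the paper: since the ball constraint is inactive, $u_\epsilon$ is an unconstrained local minimizer of the Fr\'echet-differentiable functional $\Phi_\epsilon(\cdot)+\tfrac12\|\cdot-\bar u\|_{L^2(\Omega)}^2$, so its derivative vanishes. The paper's own proof condenses this into two sentences, citing \cref{ass_standing} and \cref{lem_diff_Geps}; your version simply spells out the individual derivatives and the chain rule through the embedding $V\hookrightarrow L^2(\Omega)$.
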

\begin{proof}
Due to the assumptions, $u_\epsilon$ is a local minimum of $u\mapsto \Phi_\epsilon(u) + \frac12 \|u-\bar u\|_{{\color{red} L^2(\Omega)}}^2$. This functional is differentiable according to \cref{ass_standing} and
\cref{lem_diff_Geps}.
\end{proof}

We will now pass to the limit $\epsilon\searrow0$ in \eqref{eq_optcon_Pauxrho}.
The term $\langle u_\epsilon-\bar u, \, v\rangle_{{\color{red} L^2(\Omega)}}$ will disappear according to \cref{lem_conv_aux_solutions}.
Thus, the next step is to investigate the behavior of $G_\epsilon'(u_\epsilon)$ for $\epsilon\searrow0$.

\begin{lemma}\label{lem_convergence_lambdak}
Let $(\epsilon_k)$ be a sequence of positive numbers with $\epsilon_k\to 0$, and $(u_k)$ be a sequence of global solutions of \eqref{eq_aux_Phi_eps}.
Define
\[
\lambda_k := G_{\epsilon_k}'(u_k).
\]
Then $\lambda_k \to \bar\lambda$  in $V^*$.
\end{lemma}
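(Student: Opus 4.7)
The plan is to avoid any direct analysis of $\lambda_k = G_{\epsilon_k}'(u_k)$ through its integral representation $2 u_k\,\psi_{\epsilon_k}'(u_k^2)$---which would be delicate on the zero set of $\bar u$, where $\psi_{\epsilon_k}'$ can blow up like $\epsilon_k^{p-2}$---and instead to read off the convergence of $\lambda_k$ in $V^*$ directly from the optimality condition \eqref{eq_optcon_Pauxrho}.

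By \cref{lem_conv_aux_solutions} we have $u_k\to\bar u$ in $V$, so for all sufficiently large $k$ the constraint $\|u_k-\bar u\|_V\le\rho$ is inactive and \cref{lem_optcond_Pauxrho} applies. Rearranging \eqref{eq_optcon_Pauxrho} yields, for every $v\in V$,
\[
\beta\langle\lambda_k,v\rangle_{V^*,V} = -F'(u_k)v - \alpha\langle u_k,v\rangle_V - \langle u_k-\bar u,v\rangle_{L^2(\Omega)}.
\]
The three terms on the right-hand side are naturally identified as elements of $V^*$: $F'(u_k)$ directly, the Riesz representative $R u_k$ of $\langle u_k,\cdot\rangle_V$, and the image of $u_k-\bar u$ under the continuous embedding $L^2(\Omega)\hookrightarrow V^*$ arising from the Gelfand triple structure.

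Each of these three limits is standard. First, $F'(u_k)\to F'(\bar u)$ in $V^*$ by continuous Fr\'echet differentiability of $F$ and the strong convergence $u_k\to\bar u$ in $V$. Second, the Riesz map $R\colon V\to V^*$ is a bounded linear isomorphism, so $R u_k\to R\bar u$ in $V^*$. Third, continuity of $L^2(\Omega)\hookrightarrow V^*$ together with $u_k\to\bar u$ in $L^2(\Omega)$ gives $u_k-\bar u\to 0$ in $V^*$. Dividing by $\beta>0$ therefore identifies the limit as
\[
\bar\lambda := -\beta^{-1}\left(F'(\bar u) + \alpha R\bar u\right)
\]
and yields $\lambda_k\to\bar\lambda$ in $V^*$.

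Since the argument reduces to passing to the limit in three terms with well-behaved convergence, there is no substantial obstacle. The only conceptual point is to recognize that the optimality condition itself is the appropriate identity to pass to the limit in, rather than the integral representation of $\lambda_k$, which does not admit any useful pointwise limit on the set where $\bar u$ vanishes.
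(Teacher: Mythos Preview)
Your proof is correct and follows essentially the same approach as the paper: the paper's one-line proof (``This is a consequence of the convergence $u_k \to \bar u$ by \cref{lem_conv_aux_solutions} and the continuity of $F'$'') is precisely the argument you spell out, namely rearranging the optimality condition \eqref{eq_optcon_Pauxrho} to express $\beta\lambda_k$ as $-F'(u_k)-\alpha R u_k - (u_k-\bar u)$ in $V^*$ and passing to the limit term by term. Your added commentary about why the integral representation of $\lambda_k$ is the wrong route is a useful observation, but the proof itself matches the paper's intended argument.
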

\begin{proof}
This is a consequence of the convergence $u_k \to \bar u$ by \cref{lem_conv_aux_solutions} and the continuity of $F'$.
\end{proof}

\begin{lemma}\label{lem_est_Gpu}
Let $u\in V$ and $\epsilon>0$ be given. Then
\[
  p \int_\Omega |u|^p \dx
 \ge
  G_\epsilon'(u^2)u
  =
  p \int_\Omega \min(\epsilon^{p-2},|u|^{p-2})|u|^2 \dx
  .
 \]
\end{lemma}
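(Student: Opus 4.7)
The plan is to prove both the equality and the inequality by reducing everything to a pointwise comparison, after expressing $G_\epsilon'(u)u$ via the explicit formula for $\psi_\epsilon'$ given earlier. I read the statement as concerning $G_\epsilon'(u)u$ (the ``$u^2$'' in the assertion appears to be a notational slip, since $G_\epsilon'$ acts on elements of $L^2(\Omega)$).

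First I would apply \cref{lem_diff_Geps} with $h=u$ to obtain
\[
G_\epsilon'(u)u = \int_\Omega 2\,u(x)^2\,\psi_\epsilon'(u(x)^2)\dx.
\]
Substituting the explicit expression $\psi_\epsilon'(t) = \tfrac{p}{2}\min(\epsilon^{p-2},t^{(p-2)/2})$ with $t = u(x)^2$, and noting that $(u(x)^2)^{(p-2)/2} = |u(x)|^{p-2}$ wherever $u(x)\neq 0$, yields the stated equality
\[
G_\epsilon'(u)u = p\int_\Omega \min(\epsilon^{p-2},|u(x)|^{p-2})\,|u(x)|^2 \dx.
\]
Points where $u(x)=0$ contribute nothing to the integral, so no ambiguity in $|u|^{p-2}$ arises.

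For the inequality it suffices to verify the pointwise bound
\[
\min(\epsilon^{p-2},|u(x)|^{p-2})\,|u(x)|^2 \le |u(x)|^p
\]
for a.e.\ $x\in\Omega$. I would split into two cases. If $|u(x)|\ge \epsilon$, then since $p-2<0$, we have $|u(x)|^{p-2}\le \epsilon^{p-2}$, so the minimum equals $|u(x)|^{p-2}$ and the product equals exactly $|u(x)|^p$. If $|u(x)|<\epsilon$, the minimum equals $\epsilon^{p-2}$, and the desired bound $\epsilon^{p-2}|u(x)|^2\le |u(x)|^p$ is equivalent to $\epsilon^{p-2}\le |u(x)|^{p-2}$, which again follows from $p-2<0$ and $|u(x)|<\epsilon$ (the case $u(x)=0$ being trivial since both sides vanish). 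Integration over $\Omega$ then gives the claimed inequality.

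There is no real obstacle: the argument is a direct computation combined with an elementary two-case pointwise estimate exploiting the sign of $p-2$. The only minor subtlety is the handling of the zero set of $u$, which is disposed of by observing that the integrand vanishes there.
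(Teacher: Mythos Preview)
Your proposal is correct and follows essentially the same route as the paper: express $G_\epsilon'(u)u$ via \cref{lem_diff_Geps}, substitute the explicit formula for $\psi_\epsilon'$, verify the pointwise bound $2\psi_\epsilon'(t^2)t^2\le p|t|^p$, and integrate. You are also right that the ``$u^2$'' in the statement is a notational slip; the paper itself invokes \cref{lem_diff_Geps}, whose argument is $u$, and the rest of the paper (e.g.\ \cref{lem_conv_lambdau}) uses $G_{\epsilon_k}'(u_k)$ accordingly.
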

\begin{proof}
The integrand in the expression $G_\epsilon'(u^2)u$ according to \cref{lem_diff_Geps} is of the type
\[
 2\psi'_{\epsilon}(t^2)t^2 = p \min(\epsilon^{p-2},|t|^{p-2})t^2 \qquad \forall t\in \R,
\]
which immediately implies
\[
 p  |t|^p \ge 2\psi'_{\epsilon}(t^2)t^2  =  p \min(\epsilon^{p-2},|t|^{p-2})t^2 \qquad \forall t\in \R,
\]
and the claims follows by integration.
\end{proof}

\begin{lemma}\label{lem_conv_lambdau}
Let $(\epsilon_k),(u_k),(\lambda_k)$ be as in \cref{lem_convergence_lambdak} with $u_k\to \bar u$ and $\lambda_k\to \bar \lambda$ in $V$ and $V^*$, respectively.
Then
\[
 \langle \lambda_k, u_k\rangle_{V^*,V} \to \langle \bar\lambda, \bar u\rangle_{V^*,V}  =  p \int_\Omega |\bar u|^p \dx.
 \]
\end{lemma}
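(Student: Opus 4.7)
The statement contains two assertions: convergence of the duality pairings $\langle \lambda_k, u_k\rangle_{V^*,V} \to \langle \bar\lambda, \bar u\rangle_{V^*,V}$, and identification of this limit with $p\int_\Omega |\bar u|^p\dx$. The first assertion is a routine consequence of the fact that the duality pairing is continuous under strong-strong convergence: from the standard estimate $|\langle \lambda_k, u_k\rangle_{V^*,V} - \langle \bar\lambda, \bar u\rangle_{V^*,V}| \le \|\lambda_k-\bar\lambda\|_{V^*}\|u_k\|_V + \|\bar\lambda\|_{V^*}\|u_k-\bar u\|_V$, both terms vanish in the limit.

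The nontrivial part is the identification of the limit. I would first use the Gelfand triple embedding $L^2(\Omega) \hookrightarrow V^*$ (which is how $\lambda_k = G'_{\epsilon_k}(u_k) \in L^2(\Omega)$ is interpreted as an element of $V^*$) to rewrite the pairing as $\langle \lambda_k, u_k\rangle_{V^*,V} = G'_{\epsilon_k}(u_k)u_k$. Then \cref{lem_est_Gpu} gives the explicit representation
\[
\langle \lambda_k, u_k\rangle_{V^*,V} = p\int_\Omega \min\bigl(\epsilon_k^{p-2},\,|u_k|^{p-2}\bigr)|u_k|^2\dx,
\]
so the task reduces to showing that this integral converges to $\int_\Omega |\bar u|^p\dx$.

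For this, I would argue along the subsequence-subsequence principle. The strong convergence $u_k \to \bar u$ in $V \hookrightarrow L^2(\Omega)$ lets us extract a subsequence (not relabeled) with $u_k(x) \to \bar u(x)$ pointwise a.e. Pointwise analysis of the integrand then splits into two cases. At points where $\bar u(x) \neq 0$, eventually $|u_k(x)| \ge \epsilon_k$, so the integrand equals $|u_k(x)|^p \to |\bar u(x)|^p$. At points where $\bar u(x)=0$, the elementary bound $\min(\epsilon_k^{p-2},|u_k|^{p-2})|u_k|^2 \le |u_k|^p \to 0$ forces the integrand to go to $0 = |\bar u(x)|^p$. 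So the integrand converges pointwise a.e.\ to $|\bar u|^p$.

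To exchange limit and integral, I would use the same upper bound $\min(\epsilon_k^{p-2},|u_k|^{p-2})|u_k|^2 \le |u_k|^p$ together with the fact that $|u_k|^p \to |\bar u|^p$ in $L^1(\Omega)$; this $L^1$ convergence follows either from the continuity of the Nemytskii operator $u\mapsto |u|^p$ from $L^2(\Omega)$ into $L^{2/p}(\Omega) \hookrightarrow L^1(\Omega)$, or more directly from applying dominated convergence after passing to a further subsequence with an $L^2$ majorant. A generalized dominated convergence argument (equivalently, Fatou's lemma applied to the nonnegative differences $|u_k|^p - \min(\epsilon_k^{p-2},|u_k|^{p-2})|u_k|^2$ combined with Fatou on the integrands themselves) yields the desired convergence of the integrals. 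Since the same limit is obtained along every subsequence, the full sequence converges. The main obstacle throughout is controlling the integrand on the vanishing set $\{\bar u = 0\}$, where the prefactor $\epsilon_k^{p-2}$ blows up; this is resolved cleanly by the universal upper bound by $|u_k|^p$ granted by \cref{lem_est_Gpu}.
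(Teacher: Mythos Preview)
Your proposal is correct and follows essentially the same strategy as the paper: invoke \cref{lem_est_Gpu} to obtain the explicit representation $\langle\lambda_k,u_k\rangle_{V^*,V}=p\int_\Omega\min(\epsilon_k^{p-2},|u_k|^{p-2})|u_k|^2\dx$, pass to a subsequence with pointwise a.e.\ convergence and an $L^2$-majorant, and then use dominated convergence. The only cosmetic difference is that the paper packages the limit passage as a sandwich argument---it bounds the integral above by $p\int_\Omega|u_k|^p\dx$ and below by the restriction to $\{\bar u\ne0\}$, and shows both sides tend to $p\int_\Omega|\bar u|^p\dx$---whereas you argue directly that the integrand converges pointwise to $|\bar u|^p$ (handling the set $\{\bar u=0\}$ via the same upper bound $|u_k|^p$) and then appeal to generalized dominated convergence with the moving majorant $|u_k|^p$. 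Both routes are equivalent in content; your pointwise case split is perhaps slightly more transparent about what happens on $\{\bar u=0\}$, while the paper's squeeze avoids having to name the pointwise limit there at all.
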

\begin{proof}
Let us recall the definition $\lambda_k = G_{\epsilon_k}'(u_k)$, so that \cref{lem_est_Gpu} gives an upper and lower bound of $\langle \lambda_k, u_k\rangle_{V^*,V}$.
We will show that both bounds converge to the same value.
 Since $u_k\to\bar u$ in $V$ we can assume (after possibly extracting a subsequence) that $u_k \to \bar u$ pointwise a.e.,
 with $|u_k|\le w$  pointwise a.e.\@ for some $w\in L^2(\Omega)$, see \cite[Thm 4.9]{Brezis2011}.
 Hence, $p w^p \in L^1(\Omega)$ is an integrable pointwise upper bound of the integrands in both integrals of \cref{lem_est_Gpu} evaluated at $u_k$.
Clearly,  $\int_\Omega |u_k|^p \dx \to \int_\Omega |\bar u|^p \dx$ by dominated convergence.
For the second integral, we have
\[
 \int_\Omega \min(\epsilon_k^{p-2},|u_k|^{p-2})|u_k|^2 \dx \ge \int_{\{x:\bar u(x)\ne0\}} \min(\epsilon_k^{p-2},|u_k|^{p-2})|u_k|^2 \dx.
\]
If $x\in \Omega$ is such that $\bar u(x)\ne0$ then we get $\min(\epsilon_k^{p-2},|u_k(x)|^{p-2}) \to |\bar u(x)|^{p-2}$.
And we can use dominated convergence theorem to pass to the limit $k\to\infty$ as follows
\[
\int_{\{x:\bar u(x)\ne0\}} \min(\epsilon_k^{p-2},|u_k|^{p-2})|u_k|^2 \dx
 \to \int_{\{x:\bar u(x)\ne0\}} |\bar u|^p \dx = \int_\Omega |\bar u|^p \dx.
\]
The claim follows now with \cref{lem_est_Gpu}.
\end{proof}

In this sense, $\bar \lambda$ can be interpreted as the derivative of $u\mapsto |u|^p$ at $\bar u$.

\begin{remark}
Let us assume that $u\zeta \in V$ for all $u\in V$ and $\zeta\in C_c^\infty(\Omega)$.
Then one can prove with similar arguments as in \cref{lem_conv_lambdau}
that
 \[
  \langle \bar\lambda,\zeta \bar u\rangle_{V^*,V}  =  p \int_\Omega |\bar u|^p \zeta \dx
 \]
for all $\zeta\in C_c^\infty(\bar \Omega)$ with $\zeta\ge0$.
Hence, $ \bar\lambda \bar u$ can be interpreted as a non-negative distribution.
\end{remark}

Now we have everything at hand to be able to prove optimality conditions for the original problem \eqref{eq001}.

\begin{theorem}\label{thm_opt_cond}
 Let $\bar u$ be a local solution of the original problem. Then there is $\bar\lambda\in V^*$ such that
 \begin{equation}\label{eq_optcon_P}
  \alpha \langle \bar u, v\rangle_V  + \beta \langle \bar\lambda, v\rangle_{V^*,V}   = - F'(\bar u)v \qquad \forall v\in V.
 \end{equation}
 and
\[
   \langle \bar\lambda, \bar u\rangle_{V^*,V}  =  p \int_\Omega |\bar u|^p \dx.
 \]
\end{theorem}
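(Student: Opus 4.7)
The plan is to run the penalization-plus-smoothing scheme that has been set up in the lemmas above and pass to the limit. Fix $\rho>0$ so that $\bar u$ minimizes $\Phi_0$ on $B_\rho(\bar u) := \{u\in V: \|u-\bar u\|_V \le \rho\}$. Choose any sequence $\epsilon_k\searrow 0$ and, for each $k$, let $u_k$ be a global solution of the auxiliary problem \eqref{eq_aux_Phi_eps} with smoothing parameter $\epsilon_k$; such $u_k$ exists by the same weak-compactness argument used for \cref{thm_exist_sol} (the term $\frac12\|u-\bar u\|_{L^2(\Omega)}^2$ is weakly lower semicontinuous on $V$ by the compact embedding).

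By \cref{lem_conv_aux_solutions} we have $u_k\to \bar u$ in $V$. Consequently, for all $k$ sufficiently large, $\|u_k-\bar u\|_V<\rho$, so the ball constraint is inactive and \cref{lem_optcond_Pauxrho} applies, giving
\begin{equation*}
  \alpha \langle u_k, v\rangle_V + \beta\, G'_{\epsilon_k}(u_k)v + \langle u_k-\bar u, v\rangle_{L^2(\Omega)} = -F'(u_k)v \qquad \forall v\in V.
\end{equation*}
Set $\lambda_k := G'_{\epsilon_k}(u_k)\in V^*$, so that
\begin{equation*}
  \beta \lambda_k = -F'(u_k) - \alpha\, J u_k - \iota^*(u_k - \bar u),
\end{equation*}
where $J:V\to V^*$ is the Riesz isomorphism and $\iota^*:L^2(\Omega)\to V^*$ is the adjoint of the embedding. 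Each term on the right-hand side converges in $V^*$: $F'(u_k)\to F'(\bar u)$ by continuity of $F'$, $Ju_k\to J\bar u$ by the strong convergence $u_k\to\bar u$ in $V$, and $\iota^*(u_k-\bar u)\to 0$ since $u_k\to\bar u$ in $L^2(\Omega)$ by the compact embedding. This yields $\lambda_k\to\bar\lambda$ in $V^*$ for some $\bar\lambda\in V^*$, which is exactly the content of \cref{lem_convergence_lambdak}. Passing to the limit in the optimality condition above gives the first equation \eqref{eq_optcon_P} in the theorem.

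Finally, since $u_k\to\bar u$ in $V$ and $\lambda_k\to\bar\lambda$ in $V^*$, \cref{lem_conv_lambdau} delivers
\begin{equation*}
  \langle \bar\lambda, \bar u\rangle_{V^*,V} = p\int_\Omega |\bar u|^p \dx,
\end{equation*}
which is the second identity. The only potential subtlety is the deactivation of the ball constraint, but this is precisely guaranteed by the strong $V$-convergence $u_k\to\bar u$; no hard estimates beyond those already supplied by the preceding lemmas are needed.
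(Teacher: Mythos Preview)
Your proof is correct and follows exactly the route the paper intends: the paper's own proof reads ``This is a consequence of the results above,'' and you have faithfully unpacked how \cref{lem_conv_aux_solutions}, \cref{lem_optcond_Pauxrho}, \cref{lem_convergence_lambdak}, and \cref{lem_conv_lambdau} assemble into the theorem. One cosmetic remark: for $\iota^*(u_k-\bar u)\to 0$ you only need the \emph{continuous} embedding $V\hookrightarrow L^2(\Omega)$ together with the already-established strong convergence $u_k\to\bar u$ in $V$; compactness is not required at that step.
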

\begin{proof}
This is a consequence of the results above.
\end{proof}

\begin{remark}
Unfortunately and interestingly,
 the origin $\bar u=0$ solves the system of \cref{thm_opt_cond} with the trivial choice $\bar\lambda=-\beta^{-1} F'(0)$.
 This is in line with the observation of \cref{rem_origin_local_sol} that the origin is a minimum along lines.
\end{remark}

Under stronger assumptions on $F$ and on the underlying space $V$, we can prove additional regularity of $\bar u$ and $\bar\lambda$.

\begin{assumption}[Truncation and density] \phantom{\quad}
\label{ass_truncation}
\begin{enumerate}
	\item
	\label{ass_item_trunc}
For all $u\in V$, we have that $v$ defined by $v(x):=\max(-1, \min(u(x),+1))$ belongs to $V$ and satisfies $\langle u,v\rangle_V \ge \|v\|_V^2$.
\item
	\label{ass_item_density}
	$C_0(\Omega)\cap V$ is dense in $V$ and $C_0(\Omega)$ with respect to their norms.
\end{enumerate}
\end{assumption}

% Under this assumption, arbitrary truncations of functions from $V$ belong to $V$: in fact, for $u\in V$ and $a<b$ we have
% \[
% 	\max(a, \min(u,b)) = \frac{b-a}2 \max\left(-1,\ \min\left( \frac{u-\frac{b+a}2}{\frac{b-a}2}, +1\right)\right) + \frac{b+a}2 \in V.
% \]

\begin{lemma}\label{lem_boundedl1_lambda}
Suppose \cref{ass_truncation} is satisfied.
 Let $u_\epsilon$ be a solution of \eqref{eq_optcon_Pauxrho} such that $F'(u_\epsilon)\in L^1(\Omega)$.
 Define $\lambda_\epsilon:=G'(u_\epsilon)$. Then
 \[
\beta \|\lambda_\epsilon\|_{L^1(\Omega)} \le \|F'(u_\epsilon)\|_{L^1(\Omega)} + \|\bar u\|_{L^1(\Omega)}.
 \]
 \end{lemma}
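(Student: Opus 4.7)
The plan is to test the optimality equation \eqref{eq_optcon_Pauxrho} with a bounded truncation of $n u_\epsilon$ and let $n\to\infty$. Concretely, for $n \in \N$ define
\[
 v_n(x) := \max(-1,\min(n\, u_\epsilon(x),1)).
\]
Since $nu_\epsilon \in V$, \cref{ass_truncation}\ref{ass_item_trunc} gives $v_n \in V$ together with $\langle nu_\epsilon,v_n\rangle_V \ge \|v_n\|_V^2 \ge 0$, hence
\[
 \langle u_\epsilon,v_n\rangle_V \ge 0.
\]
This non-negative sign of the $V$-term is the key reason why the simple truncation bound ultimately works.

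Next I would compute the two pointwise identities that drive everything. From \cref{lem_diff_Geps} and the formula for $\psi_\epsilon'$,
\[
 \lambda_\epsilon(x) = p\, u_\epsilon(x)\, \min\!\bigl(\epsilon^{p-2},|u_\epsilon(x)|^{p-2}\bigr),
\]
and straightforward case analysis yields $u_\epsilon v_n = |u_\epsilon|\min(1,n|u_\epsilon|)\ge 0$ and $|v_n|\le 1$. Therefore
\[
 \lambda_\epsilon v_n = p\,|u_\epsilon|\min(1,n|u_\epsilon|)\,\min\!\bigl(\epsilon^{p-2},|u_\epsilon|^{p-2}\bigr) \nearrow |\lambda_\epsilon|
\]
pointwise on $\{u_\epsilon\neq 0\}$ (and both sides vanish on $\{u_\epsilon=0\}$), so monotone convergence gives $\int_\Omega \lambda_\epsilon v_n\dx \to \|\lambda_\epsilon\|_{L^1(\Omega)}$.

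Now I would substitute $v=v_n$ into \eqref{eq_optcon_Pauxrho} and rearrange:
\[
 \beta \int_\Omega \lambda_\epsilon v_n\dx = -F'(u_\epsilon)v_n - \langle u_\epsilon-\bar u,v_n\rangle_{L^2(\Omega)} - \alpha\langle u_\epsilon,v_n\rangle_V.
\]
The last term is non-negative and can be dropped. Because $|v_n|\le 1$ and $F'(u_\epsilon)\in L^1(\Omega)$, we get $|F'(u_\epsilon)v_n| \le \|F'(u_\epsilon)\|_{L^1(\Omega)}$. Finally, using $u_\epsilon v_n \ge 0$,
\[
 -\langle u_\epsilon-\bar u,v_n\rangle_{L^2(\Omega)} = -\int_\Omega u_\epsilon v_n\dx + \int_\Omega \bar u v_n\dx \le \int_\Omega \bar u v_n\dx \le \|\bar u\|_{L^1(\Omega)}.
\]
Combining these estimates and passing to the limit $n\to\infty$ via monotone convergence on the left gives the claim.

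The one subtle point is the interpretation of the term $F'(u_\epsilon)v_n$: a priori $F'(u_\epsilon)\in V^*$ and the test function is $v_n\in V$, but the bound needs the $L^1$-$L^\infty$ pairing. This is exactly where \cref{ass_truncation}\ref{ass_item_density} enters: approximating $v_n$ in $V\cap C_0(\Omega)$ (using the density together with the cutoff $|v_n|\le 1$) lets one identify the duality pairing with the integral $\int_\Omega F'(u_\epsilon)v_n\dx$, and the same approximation argument justifies writing $\int_\Omega \lambda_\epsilon v_n\dx$ in place of $\lambda_\epsilon(v_n)$. I expect this compatibility argument to be the main technical obstacle; once it is in place, the inequality follows from the monotone convergence and elementary sign bookkeeping sketched above.
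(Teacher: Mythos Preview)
Your proposal is correct and follows essentially the same route as the paper: test \eqref{eq_optcon_Pauxrho} with $v_n=\max(-1,\min(n u_\epsilon,1))$, use \cref{ass_truncation}\ref{ass_item_trunc} to get $\langle u_\epsilon,v_n\rangle_V\ge0$, exploit $u_\epsilon v_n\ge0$ and $|v_n|\le1$ for the $L^2$- and $F'$-terms, and pass to the limit in $\int_\Omega\lambda_\epsilon v_n\dx$ (the paper uses dominated convergence where you use monotone convergence, which is inconsequential here). Your remark on identifying the $V^*$--$V$ pairing with the $L^1$--$L^\infty$ integral is a point the paper leaves implicit.
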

\begin{proof}
We follow an idea of \cite[Theorem 5.1]{ItoKunisch2000}.
Let us test \eqref{eq_optcon_Pauxrho} with $v_n:=\max(-1,\min(n\cdot u_\epsilon,+1)) \approx \operatorname{sign}(u_\epsilon)$, where $n\in \mathbb N$.
Using \cref{ass_truncation}, we get $v_n\in V$ and $\langle u,v_n\rangle_V \ge n^{-1} \|v_n\|_V^2 \ge0$.
{\color{red}
In addition, $\langle u_\epsilon, \, v_n\rangle_{L^2(\Omega)}\ge 0$ and $|\langle \bar u, \, v_n\rangle_{L^2(\Omega)}| \le \|\bar u\|_{L^1(\Omega)}$.
}
With $v_n$ as test function in  \eqref{eq_optcon_Pauxrho}, we get
\[
{\color{red}\alpha\, } n^{-1} \|v_n\|_V^2 + \beta \int_\Omega \lambda_\epsilon v_n\dx  \le \|F'(u_\epsilon)\|_{L^1(\Omega)} + \|\bar u\|_{L^1(\Omega)}.
\]
The integral involving $\lambda_\epsilon$ can be written as
\[
\int_\Omega \lambda_\epsilon v_n\dx = \int_\Omega 2u_\epsilon(x)\cdot \psi_\epsilon'(u_\epsilon(x)^2) \cdot v_n(x)\ \dx,
\]
see \cref{lem_diff_Geps}. We can pass to the limit $n\to\infty$ by dominated convergence to obtain
\begin{multline*}
\int_\Omega \lambda_\epsilon v_n\dx = \int_\Omega 2u_\epsilon(x)\cdot \psi_\epsilon'(u_\epsilon(x)^2) \cdot v_n(x)\ \dx
\\
\to  \int_\Omega 2|u_\epsilon(x)|\cdot \psi_\epsilon'(u_\epsilon(x)^2)  \dx =\|\lambda_\epsilon\|_{L^1(\Omega)},
\end{multline*}
which is the claim.
\end{proof}

\begin{theorem}
Suppose \cref{ass_truncation} is satisfied.
Assume $F'$ is a continuous mapping from $V$ to $L^1(\Omega)$.
 Let $\bar u$ be a local solution of the original problem \eqref{eq_problem}.
 Then $\bar\lambda\in V^*$ as in \cref{thm_opt_cond}
 belongs to $\mathcal M(\Omega) =C_0(\Omega)^*$.
\end{theorem}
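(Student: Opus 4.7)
The plan is to combine the uniform $L^1$-bound on the penalized multipliers $\lambda_\epsilon$ supplied by \cref{lem_boundedl1_lambda} with weak-$*$ compactness in $\mathcal M(\Omega)$, and to identify the resulting measure with $\bar\lambda$ via the density assumption \cref{ass_truncation}\ref{ass_item_density}.

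First I would fix sequences $\epsilon_k \searrow 0$, $u_k$ and $\lambda_k := G_{\epsilon_k}'(u_k)$ as in \cref{lem_conv_aux_solutions} and \cref{lem_convergence_lambdak}, so that $u_k \to \bar u$ in $V$ and $\lambda_k \to \bar\lambda$ in $V^*$. Since $F' \colon V \to L^1(\Omega)$ is continuous by hypothesis, $F'(u_k) \to F'(\bar u)$ in $L^1(\Omega)$, and in particular $\|F'(u_k)\|_{L^1(\Omega)}$ is uniformly bounded. Applying \cref{lem_boundedl1_lambda} to each $u_k$ then yields a uniform bound
\[
\beta\,\|\lambda_k\|_{L^1(\Omega)} \le \|F'(u_k)\|_{L^1(\Omega)} + \|\bar u\|_{L^1(\Omega)} \le C.
\]

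Next I would pass to the measure space. The continuous embedding $L^1(\Omega) \hookrightarrow \mathcal M(\Omega)$ (via $w \mapsto w\,\mathrm dx$) shows that $(\lambda_k)$ is bounded in $\mathcal M(\Omega) = C_0(\Omega)^*$. Banach–Alaoglu furnishes a subsequence (not relabelled) and $\tilde\lambda \in \mathcal M(\Omega)$ with $\lambda_k \rightharpoonup^* \tilde\lambda$ in $\mathcal M(\Omega)$.

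The third step is to identify $\tilde\lambda$ with $\bar\lambda$. For any test function $v \in C_0(\Omega)\cap V$, the action of $\lambda_k$ admits two readings that coincide, both being equal to $\int_\Omega \lambda_k\, v \dx$:
\[
\langle \lambda_k, v\rangle_{V^*,V} \;\longrightarrow\; \langle \bar\lambda, v\rangle_{V^*,V},
\qquad
\int_\Omega \lambda_k\, v \dx \;\longrightarrow\; \langle \tilde\lambda, v\rangle_{\mathcal M(\Omega),\,C_0(\Omega)}.
\]
Hence the two limits agree on $C_0(\Omega)\cap V$. By \cref{ass_truncation}\ref{ass_item_density}, $C_0(\Omega)\cap V$ is dense both in $V$ and in $C_0(\Omega)$; together with the uniform bound $\|\tilde\lambda\|_{\mathcal M(\Omega)} \le C/\beta$ this means the functional $v \mapsto \langle \bar\lambda, v\rangle_{V^*,V}$, originally defined on $V$, is $\|\cdot\|_\infty$-continuous on the dense subspace $C_0(\Omega)\cap V$ and therefore extends uniquely to a bounded linear functional on $C_0(\Omega)$, namely $\tilde\lambda$. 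Via this identification $\bar\lambda \in \mathcal M(\Omega)$, proving the claim.

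The only delicate point is the compatibility of the $V^*$-duality and the $\mathcal M(\Omega)$-$C_0(\Omega)$ duality: it is used that, for each fixed $k$, the element $\lambda_k$ sits in $L^1(\Omega)$ and thus acts by integration in both dualities simultaneously. Once this is observed, the argument is a standard weak-$*$ compactness and density argument, so I do not expect any substantial obstacle beyond carefully recording this identification.
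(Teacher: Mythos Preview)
Your proof is correct and follows essentially the same approach as the paper: bound $(\lambda_k)$ in $L^1(\Omega)$ via \cref{lem_boundedl1_lambda}, extract a weak-$*$ limit $\tilde\lambda$ in $\mathcal M(\Omega)$ by Banach--Alaoglu, and identify $\tilde\lambda$ with $\bar\lambda$ using the density in \cref{ass_truncation}\ref{ass_item_density}. Your write-up is in fact more explicit than the paper's about the identification step (the paper condenses your third paragraph into a single sentence); the only minor omission is that \cref{lem_boundedl1_lambda} requires $u_k$ to satisfy \eqref{eq_optcon_Pauxrho}, which in turn needs $\|u_k-\bar u\|_V<\rho$ --- but this holds for all sufficiently large $k$ by \cref{lem_conv_aux_solutions}, so discarding finitely many terms resolves it.
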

\begin{proof}
Due to the assumptions, $(F'(u_k))$ is bounded in $L^1(\Omega)$.
By \cref{lem_boundedl1_lambda}, the sequence  $(\lambda_k)$ is bounded in $L^1(\Omega)$.
We can identify $L^1(\Omega)$ with a subspace of $C_0(\Omega)^*$ using the natural embedding.
Hence (after extracting a subsequence if necessary), we have $\lambda_k \rightharpoonup^* \tilde\lambda$ in $C_0(\Omega)^*$.
Due to the density assumption in \cref{ass_truncation}, it follows $\tilde\lambda = \bar\lambda$.
\end{proof}

In order to obtain $L^\infty$-regularity, we need the following embedding assumption.

\begin{assumption}[$L^q$-embedding]
\label{ass_lq_embedding}
There is $q>2$ such that $V$ is continuously embedded in $L^q(\Omega)$.
\end{assumption}

The following lemma mimics one key step in the proof of the celebrated $L^\infty$-regularity result for weak solutions of elliptic partial differential
equations of \cite{Stampacchia1965}.

\begin{lemma}[$L^\infty$-regularity]
\label{lem_linfty}
Let \cref{ass_truncation} and \cref{ass_lq_embedding} be satisfied. Let $t>1$ be such that $\frac1t + \frac2q < 1$.
Then there is $c>0$ such that for all $g\in L^t(\Omega)$ and $u\in V$ the property
	\[
		\langle u,v_n\rangle_V  \le \int_\Omega g v_n\dx \quad \forall n\in \N
	\]
	with $v_n:=u-\max(-n,\min(u,+n))$
	implies
	\[
		\|u\|_{L^\infty(\Omega)} \le c \|g\|_{L^t(\Omega)}.
	\]
\end{lemma}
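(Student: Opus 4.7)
The plan is to run Stampacchia's classical truncation argument, adapted to the abstract Hilbert-space setting, with the truncation property of \cref{ass_truncation} playing the role of the Sobolev chain rule. Write $T_n(u) := \max(-n,\min(u,n))$ so that $v_n = u - T_n(u)$, and set $A_k := \{x\in\Omega : |u(x)|>k\}$, $\phi(k) := |A_k|$. Note that $v_n$ is supported on $A_n$ and $|v_n| \ge m-n$ on $A_m$ whenever $m>n$; the target is to show that $\phi$ vanishes at some level bounded by a multiple of $\|g\|_{L^t(\Omega)}$.

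The first step will be to upgrade \cref{ass_truncation}\ref{ass_item_trunc} to the scaled inequality $\langle u, T_n(u)\rangle_V \ge \|T_n(u)\|_V^2$ for every $n\in\N$, which follows by applying the assumption to $u/n\in V$ and using the identity $T_1(u/n) = T_n(u)/n$. Expanding $\|v_n\|_V^2$ and $\langle u,v_n\rangle_V$ in terms of $u$ and $T_n(u)$ then yields the coercivity $\langle u,v_n\rangle_V \ge \|v_n\|_V^2$, and combining with the hypothesis gives $\|v_n\|_V^2 \le \int_\Omega g\, v_n\dx$. Since $v_n$ vanishes outside $A_n$ and the assumption $\tfrac{1}{t}+\tfrac{2}{q}<1$ implies in particular $t' < q$, two successive applications of Hölder together with the embedding $V\hookrightarrow L^q$ should give
\[
\int_\Omega g\, v_n\dx \le c\,\|g\|_{L^t(\Omega)}\, |A_n|^{\frac{1}{t'}-\frac{1}{q}}\, \|v_n\|_V.
\]
Dividing by $\|v_n\|_V$ and reinvoking the embedding would then produce $\|v_n\|_{L^q(\Omega)} \le c\,\|g\|_{L^t(\Omega)}\, |A_n|^{1/t' - 1/q}$.

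For the final step, I would exploit $|v_n| \ge m-n$ on $A_m$ whenever $m>n$: raising to the $q$-th power gives the Stampacchia-type decay
\[
\phi(m) \le \frac{c\,\|g\|_{L^t(\Omega)}^q}{(m-n)^q}\, \phi(n)^{\gamma}, \qquad \gamma := \frac{q}{t'} - 1,
\]
and the sharp condition $\tfrac{1}{t}+\tfrac{2}{q}<1$ translates exactly into $\gamma>1$. Stampacchia's iteration lemma \cite{Stampacchia1965} then applies with $k_0 = 0$ and $\phi(0) \le |\Omega|$, yielding $\phi(d) = 0$ for some $d \le c\,\|g\|_{L^t(\Omega)}$, which is the desired bound $\|u\|_{L^\infty(\Omega)} \le c\,\|g\|_{L^t(\Omega)}$. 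The only step that will require real care is the first one---transferring the simple $T_1$-truncation inequality to a coercivity estimate for the complementary truncation $v_n = u - T_n(u)$; once this is in hand, the remainder is the standard Stampacchia iteration, and the integrability exponents match up exactly because of the way $\tfrac1t + \tfrac2q < 1$ was imposed.
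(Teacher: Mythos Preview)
Your argument is correct and is precisely the Stampacchia truncation method that the paper invokes by citing \cite[Lemma 3.4]{ChristofGW}; the paper itself gives no self-contained proof here. Your derivation of $\langle u, v_n\rangle_V \ge \|v_n\|_V^2$ from the scaling $T_1(u/n) = T_n(u)/n$ is indeed the only non-routine step, and you handle it correctly.
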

\begin{proof}
This is \cite[Lemma 3.4]{ChristofGW} with $k_0=0$ and $\sigma:=(\frac1t + \frac2q -1)^{-1}<0$.
\end{proof}

% \begin{lemma}\label{lem_boundedlinfty_u}
% Suppose \cref{ass_linfty} is satisfied.
%  Let $u_\epsilon$ be a global solution of the auxiliary problem \eqref{eq_aux_Phi_eps} to $\epsilon>0$ with $\|u_\epsilon-\bar u\|_{V}<\rho$.
%  Assume $F'(u_\epsilon) \in L^q(\Omega)$ and $\bar u\in L^q(\Omega)$. Then
%  \[
%  \|u_\epsilon\|_{L^\infty(\Omega)} \le \frac{c_{q,\infty}}{\alpha+1}( \|F'(u_\epsilon)\|_{L^q(\Omega)} + \|\bar u\|_{L^q(\Omega)} ),
%  \]
%  where $q$ and $c_{q,\infty}$ are as in \cref{ass_linfty}.
% \end{lemma}
% \begin{proof}
% Testing equation \eqref{eq_optcon_Pauxrho} with $v_n:=\max(-n,\min(u_\epsilon,+n))$ results in
% \[
% (\alpha+1)\langle u,v_n\rangle_V \le \int_\Omega (\bar u-F'(u_\epsilon)) v_n\dx \quad \forall n\in \N,
% \]
% where we have used the specific form of $G_\epsilon'$ given in \cref{lem_diff_Geps}. The claim follows by \cref{ass_linfty}.
% \end{proof}

\begin{theorem}\label{thm_linfty_u}
Suppose  \cref{ass_truncation} and \cref{ass_lq_embedding} are satisfied.
 Let $\bar u$ be a local solution of the original problem.
 Assume $F'(\bar u) \in L^s(\Omega)$ with $s$ as in \cref{lem_linfty}. Then
 $\bar u\in L^\infty(\Omega)$.
\end{theorem}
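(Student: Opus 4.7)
The plan is to reduce \cref{thm_linfty_u} to an application of \cref{lem_linfty} with $g:=-\alpha^{-1}F'(\bar u)\in L^s(\Omega)$. For $n\ge 1$ I set $v_n := \bar u - \max(-n,\min(\bar u,n))$; by \cref{ass_truncation} (applied after rescaling by $1/n$) one has $v_n\in V$. Testing the optimality identity of \cref{thm_opt_cond} with $v=v_n$ gives
\begin{equation*}
\alpha\langle \bar u, v_n\rangle_V + \beta\langle \bar\lambda, v_n\rangle_{V^*,V} = -\langle F'(\bar u), v_n\rangle_{V^*,V}.
\end{equation*}
Since $F'(\bar u)\in L^s(\Omega)$ embeds into $V^*$ through the natural integral pairing, the right-hand side equals $-\int_\Omega F'(\bar u)\,v_n\dx$. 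If I can show the sign condition $\langle \bar\lambda, v_n\rangle_{V^*,V}\ge 0$, the inequality $\langle \bar u, v_n\rangle_V \le \int_\Omega g\,v_n\dx$ holds for every $n$; \cref{lem_linfty} (with $t=s$) then delivers the desired $\|\bar u\|_{L^\infty(\Omega)}\le c\,\alpha^{-1}\|F'(\bar u)\|_{L^s(\Omega)}$.

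The sign condition is the heart of the proof, and I would extract it from the smoothing construction in \cref{sec_optimality}. Let $\epsilon_k\searrow 0$ and let $u_k$ be solutions of \eqref{eq_aux_Phi_eps}, so that $u_k\to \bar u$ in $V$ (\cref{lem_conv_aux_solutions}) and $\lambda_k := G'_{\epsilon_k}(u_k)\to \bar\lambda$ in $V^*$ (\cref{lem_convergence_lambdak}). Rather than testing with the fixed $v_n$ built from $\bar u$, I would test with the $k$-dependent truncation excess of $u_k$ itself,
\begin{equation*}
v_n^{(k)} := u_k - \max(-n,\min(u_k,n)) \in V,
\end{equation*}
which has the same pointwise sign as $u_k$. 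By \cref{lem_diff_Geps} and $\psi'_{\epsilon_k}\ge 0$,
\begin{equation*}
\langle \lambda_k, v_n^{(k)}\rangle_{V^*,V} = \int_\Omega 2 u_k\,\psi'_{\epsilon_k}(u_k^2)\,v_n^{(k)}\dx \ge 0.
\end{equation*}
The rescaled form of \cref{ass_truncation} yields $\|\max(-n,\min(u_k,n))\|_V\le \|u_k\|_V$, so $(v_n^{(k)})$ is bounded in $V$; the $1$-Lipschitz character of the truncation on $L^2(\Omega)$ together with $u_k\to\bar u$ in $L^2(\Omega)$ gives $v_n^{(k)}\to v_n$ in $L^2(\Omega)$. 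These two facts force $v_n^{(k)}\rightharpoonup v_n$ in $V$, and combined with the strong convergence $\lambda_k\to \bar\lambda$ in $V^*$ one obtains $\langle \lambda_k, v_n^{(k)}\rangle\to\langle \bar\lambda, v_n\rangle$, so the nonnegativity survives in the limit.

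With $\langle \bar\lambda, v_n\rangle\ge 0$ in hand, the optimality identity gives $\alpha\langle \bar u, v_n\rangle_V \le -\int_\Omega F'(\bar u)\,v_n\dx$ for every $n\ge 1$, which is exactly the hypothesis of \cref{lem_linfty}, and the conclusion follows. The hard part is the limit passage above. A naive direct test with a fixed $v_n$ built from $\bar u$ alone would require dominated-convergence control of $2u_k\,\psi'_{\epsilon_k}(u_k^2)v_n$, but on the (possibly small but nonempty) set where $u_k$ is near zero while $|\bar u|>n$, one has $\psi'_{\epsilon_k}(u_k^2)$ as large as $\tfrac p2 \epsilon_k^{p-2}\to\infty$, and $\operatorname{sign}(u_k)\neq\operatorname{sign}(\bar u)$ is possible for finite $k$; no uniform pointwise bound is available. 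Truncating $u_k$ rather than $\bar u$ sidesteps this by making the integrand pointwise nonnegative for every $k$, reducing the question to a clean weak/strong duality in $V\times V^*$.
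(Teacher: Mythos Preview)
Your proof is correct and follows the same overall plan as the paper: reduce to \cref{lem_linfty} by establishing $\langle\bar\lambda,v_n\rangle_{V^*,V}\ge 0$ and then reading off the inequality $\alpha\langle\bar u,v_n\rangle_V\le -\langle F'(\bar u),v_n\rangle$. The difference lies in how the sign condition is obtained. The paper keeps the test function $v_n$ fixed (built from $\bar u$), writes $\langle\lambda_k,v_n\rangle=\int_{\{\bar u\ne0\}}\min(\epsilon_k^{p-2},|u_k|^{p-2})\,u_k v_n\dx$ and passes to the limit by dominated convergence in the spirit of \cref{lem_conv_lambdau}, obtaining the explicit limit $\int_{\{\bar u\ne0\}}|\bar u|^{p-2}\bar u\,v_n\dx\ge0$. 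You instead replace $v_n$ by the $k$-dependent truncation excess $v_n^{(k)}$ of $u_k$, which makes $\langle\lambda_k,v_n^{(k)}\rangle\ge0$ automatic by pointwise sign alignment, and then pass to the limit using only $v_n^{(k)}\rightharpoonup v_n$ in $V$ paired with the strong convergence $\lambda_k\to\bar\lambda$ in $V^*$ from \cref{lem_convergence_lambdak}. Your route trades the dominated-convergence step for the (easy) verification that $(v_n^{(k)})_k$ is bounded in $V$ via the rescaled truncation assumption and convergent in $L^2$; as you point out, this avoids having to produce an integrable majorant for $\psi'_{\epsilon_k}(u_k^2)\,u_k\,v_n$ on the set where $|\bar u|>n$ but $|u_k|$ may be small.
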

\begin{proof}
Let us set $v_n:=\bar u - \max(-n,\min(\bar u,+n))$.
Let $(\epsilon_k),(u_k),(\lambda_k)$ be as in \cref{lem_convergence_lambdak} with $u_k\to \bar u$ and $\lambda_k\to \bar \lambda$ in $V$ and $V^*$, respectively.
Then arguing as in the proof of \cref{lem_conv_lambdau} one can show
\[\begin{split}
\langle \lambda_k, v_n \rangle_{V^*,V}
 &=  \int_\Omega \min(\epsilon_k^{p-2},|u_k|^{p-2})u_k v_n \dx
 \\
 &= \int_{\{x:\bar u(x)\ne0\}} \min(\epsilon_k^{p-2},|u_k|^{p-2})u_kv_n \dx
  \\
 & \to \int_{\{x:\bar u(x)\ne0\}} |\bar u|^{p-1}\bar uv_n \dx\ge 0
\end{split}\]
for $k\to\infty$
with the help of dominated convergence.
Testing \eqref{eq_optcon_Pauxrho} for $(u_k,\epsilon_k)$ instead of $(u_\epsilon,\epsilon)$ with $v_n$ and passing to the limit $k\to\infty$, gives the inequality
\[
	\alpha\langle \bar u,v_n \rangle_V  \le -F'(\bar u)v_n \quad\forall n\in \N.
\]
Now the claim follows from \cref{lem_linfty}.
\end{proof}

\begin{remark}
	If $F':V\to L^s(\Omega)$ is continuous then one can prove the boundedness of $(u_k)$ in $L^\infty(\Omega)$
	with similar arguments as in the proof of \cref{thm_linfty_u} above.
\end{remark}

\section{Discussion of assumptions}
\label{sec_fractional}

\cref{ass_standing}, \cref{ass_truncation}, and \cref{ass_lq_embedding} are satisfied when $V=H^1(\Omega)$ or when $V$ is a
fractional order Sobolev space. The former is straightforward. Next, we elaborate on the fractional
order Sobolev space setting.

Let $s \in (0,1)$,  and let $\Omega\subset \R^d$ be a Lipschitz domain.
Then the Sobolev space $H^s(\Omega)$ is defined by
\[
	H^s(\Omega) := \left\{ u \in L^2(\Omega): \ \int_{\Omega}\int_{\Omega} \frac{|u(x) - u(y)|^2}{|x-y|^{d+2s}} \dy \dx < +\infty \right\}
\]
with norm
\[
	\|u\|_{H^s(\Omega)}^2 :=  \|u\|_{L^2(\Omega)}^2 +  \int_{\Omega}\int_{\Omega} \frac{|u(x) - u(y)|^2}{|x-y|^{d+2s}} \dy \dx.
\]
Let us introduce in addition
\[
	H^s_0(\Omega) := \overline{C_c^\infty(\Omega)}^{H^s(\Omega)}
\]
and
\[
	\widetilde{H}^s(\Omega) := \left\{ w \in H^s(\mathbb{R}^d) \, : \, w|_{\mathbb{R}^d \setminus \Omega} = 0 \right\} =  \overline{C_c^\infty(\Omega)}^{H^s(\R^d)},
\]
where the latter identity is due to \cite[Theorem 6]{AFiscella_FServadei_EValdinoci_2015a}.
Here, functions from $C_c^\infty(\Omega)$ are tacitly extended to $\R^d$ by zero.
This  results in the inclusions with continuous embeddings
\[
 \widetilde{H}^s(\Omega)  \subseteq H^s_0(\Omega)  \subseteq H^s(\Omega),
\]
where in the first inclusion function in $\widetilde{H}^s(\Omega)$ are understood as functions defined on $\Omega$.
In addition, we have
 $\widetilde{H}^s(\Omega)  = H^s_0(\Omega)$ if $s\ne 1/2$, and $ \widetilde{H}^s(\Omega)  = H^s_0(\Omega)  = H^s(\Omega)$ if $s<1/2$,
see \cite[Corollary 1.4.4.5]{Grisvard1985}.
We further define the Lions-Magenes space
\[
	H^{\frac12}_{00}(\Omega)
	:= \left\{ u \in H^{\frac12}(\Omega) \, : \, \int_{\Omega} \frac{u^2(x)}{\mbox{dist}(x,\partial\Omega)} \dx < +\infty
	   \right\},
\]
with norm
\[
	\|u\|_{H^{\frac12}_{00}(\Omega)} = \left( \|u\|^2_{H^{\frac12}(\Omega)}
		+  \int_{\Omega} \frac{u^2(x)}{\mbox{dist}(x,\partial\Omega)} \dx  \right)^{\frac12} .
\]
Now, we can prove that parts of the assumptions are satisfied for fractional Sobolev spaces.
Note that these parts of the assumptions only depend on the properties of the Hilbert spaces but not on the concrete choice of the inner product.

\begin{lemma}
Let $s \in (0,1)$,  and let $\Omega\subset \R^d$ be a Lipschitz domain.
 Let $V$ be one of the spaces $\widetilde{H}^s(\Omega)$, $H^s_0(\Omega)$, $H^{\frac12}_{00}(\Omega)$. %, $H^s(\Omega)$.
 Then the following conditions are satisfied:
 \begin{enumerate}
 \renewcommand{\theenumi}{(\alph{enumi})}
  \item\label{it_lem51_1} \cref{ass_standing}-\ref{ass1_V},
  \item\label{it_lem51_3} \cref{ass_truncation}-\ref{ass_item_density},
  \item\label{it_lem51_2} \cref{ass_lq_embedding}.
\end{enumerate}
  \end{lemma}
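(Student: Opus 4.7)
The plan is to verify each of the three enumerated conditions for each of the three candidate spaces in turn, exploiting the fact that all three contain $C_c^\infty(\Omega)$ as a dense subspace and are continuously embedded in $H^s(\Omega)$. The backbone of the argument is thus a small number of classical facts about fractional Sobolev spaces on bounded Lipschitz domains, transferred to $V$ via these continuous embeddings.

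For \ref{it_lem51_1}, the Hilbert space structure is immediate from the definitions: $\widetilde H^s(\Omega)$ is a closed subspace of $H^s(\R^d)$, $H^s_0(\Omega)$ is a closed subspace of $H^s(\Omega)$, and $H^{1/2}_{00}(\Omega)$ carries the explicit inner product inducing its weighted norm. The compact embedding $V\hookrightarrow L^2(\Omega)$ follows from the Rellich-Kondrachov compactness of $H^s(\Omega)\hookrightarrow L^2(\Omega)$ for $s>0$ on bounded Lipschitz domains, combined with the continuous embedding of $V$ into $H^s(\Omega)$. Density of $V$ in $L^2(\Omega)$ reduces to the density of $C_c^\infty(\Omega)\subset V$ in $L^2(\Omega)$.

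For \ref{it_lem51_3}, since $C_c^\infty(\Omega)\subset C_0(\Omega)\cap V$ and $C_c^\infty(\Omega)$ is dense in $V$, density of $C_0(\Omega)\cap V$ in $V$ is immediate. Density of $C_0(\Omega)\cap V$ in $C_0(\Omega)$ in the sup-norm is a consequence of the classical approximation of functions in $C_0(\Omega)$ by mollified, compactly supported smooth functions. For \ref{it_lem51_2}, I would invoke the fractional Sobolev embedding: $H^s(\Omega)\hookrightarrow L^q(\Omega)$ with $q=2d/(d-2s)>2$ when $2s<d$, and with any finite $q>2$ when $2s\ge d$. Again, the continuous embedding of $V$ into $H^s(\Omega)$ transfers this to $V$.

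The main subtlety is the $H^{1/2}_{00}(\Omega)$ case, because its norm carries the extra boundary-distance weighted term, so neither density of $C_c^\infty(\Omega)$ nor the $L^q$-embedding follow verbatim from the corresponding statements in $H^{1/2}(\Omega)$. Both facts are classical (essentially due to Lions-Magenes, and recorded in Grisvard's monograph), but they must be cited rather than derived from the $H^s$-results. Apart from this, the proof is a short bookkeeping exercise, and I would simply list the reference for each of the three spaces and each of the three conditions.
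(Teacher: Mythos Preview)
Your proposal is correct and follows essentially the same route as the paper: both arguments reduce everything to the continuous inclusion $V\hookrightarrow H^s(\Omega)$, the compactness of $H^s(\Omega)\hookrightarrow L^2(\Omega)$, the fractional Sobolev embedding $H^s(\Omega)\hookrightarrow L^q(\Omega)$, and the density of $C_c^\infty(\Omega)$ in $V$ (with the $H^{1/2}_{00}$ case handled via the Lions--Magenes interpolation identity). One minor over-caution: the $L^q$-embedding for $H^{1/2}_{00}(\Omega)$ is \emph{not} an additional subtlety, since $H^{1/2}_{00}(\Omega)\hookrightarrow H^{1/2}(\Omega)$ continuously and hence the embedding into $L^q(\Omega)$ follows by composition; only the density of $C_c^\infty(\Omega)$ in the stronger $H^{1/2}_{00}$-norm requires a separate citation.
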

\begin{proof}
 Clearly, the embedding $\widetilde{H}^s(\Omega) \hookrightarrow H^s(\Omega)$ is continuous.
 The embedding $H^s(\Omega) \hookrightarrow L^2(\Omega)$ is compact \cite[Theorem 7.1]{DDiNezza_GPalatucci_EValdinoci_2012a}.
 Since $V$ contains $C_c^\infty(\Omega)$, $V$ is dense in $L^2(\Omega)$, which proves \ref{it_lem51_1}.
 Moreover, $V\cap C_c^\infty(\Omega) = C_c^\infty(\Omega)$, which is dense in $V$ and $C_0(\Omega)$.
 Here, the density of $C_c^\infty(\Omega)$ in $H^{\frac12}_{00}(\Omega)$ is a consequence of the representation of this space by interpolation $H^{\frac12}_{00}(\Omega) = [H^1_0(\Omega), L^2(\Omega)]_{1/2}$,
 see \cite[Theorem 11.7]{LionsMagenes1972},
 and of the density of $C_c^\infty(\Omega)$ in $H^1_0(\Omega) = H^1_0(\Omega)\cap L^2(\Omega)$, see \cite[Theorem 4.2.2]{BerghLofstrom1976}.
 This proves \ref{it_lem51_3}.
 In addition, there is $q>2$ such that the embedding $H^s(\Omega) \hookrightarrow L^q(\Omega)$ is continuous \cite[Theorem 6.7]{DDiNezza_GPalatucci_EValdinoci_2012a},
 which is \ref{it_lem51_2}.
\end{proof}

It remains to check \cref{ass_truncation}-\ref{ass_item_trunc}, which is an assumption not only on the space $V$ but also on its inner product.
Here, we want to work with inner products induced by fractional Laplacians.

We consider two well-known definitions of fractional Laplacian \cite{HAntil_SBartels_ASchikorra_2021a,HAntil_JPfefferer_SRogovs_2018a,DDiNezza_GPalatucci_EValdinoci_2012a}.
We start with the integral fractional Laplacian. To define the integral fractional Laplace operator
we consider the weighted Lebesgue space
\[
	\mathbb{L}^1_s(\R^d) = \left\{ u : \R^d \rightarrow \R \mbox{ measurable and } \int_{\R^d}
		\frac{|u(x)|}{(1+|x|)^{d+2s}} \dx < \infty \right\} .
\]
For $u \in \mathbb{L}^1_s(\R^d)$, $\varepsilon > 0$, and $x \in \R^d$ we set
\[
	(-\Delta)^s_\varepsilon u(x) = C_{d,s} \int_{\{ y \in \R^d , \ |y-x| > \varepsilon \}}
		\frac{u(x)-u(y)}{|x-y|^{d+2s}} \dy ,
\]
where $C_{d,s} = \frac{s2^{2s} \Gamma(s+\frac{d}{2})}{\pi^{\frac{d}{2}} \Gamma(1-s)}$ is
a normalization constant. Then the \emph{integral fractional Laplacian} is defined
for $s \in (0,1)$ by taking the limit $\varepsilon \rightarrow 0$, i.e.,
\begin{equation}\label{eq:integ}
	(-\Delta)^s u(x) = C_{d,s} \mbox{P.V.} \int_{\mathbb{R}^d} \frac{u(x) - u(y)}{|x-y|^{d+2s}} \dy
		= \lim_{\varepsilon \rightarrow 0} (-\Delta)^s_\varepsilon u(x) ,
\end{equation}
where P.V. denotes the Cauchy principal value.

Due to \cite[Proposition 3.6]{AFiscella_FServadei_EValdinoci_2015a},
an equivalent norm on $\widetilde{H}^s(\Omega)$ is given by $u \mapsto \| (-\Delta)^{\frac{s}{2}} u \|_{L^2(\R^d)}$,
which motivates the following choice of the inner product
\begin{equation}\label{eq_innerpr_tildeh}
 \langle u,v \rangle_{\widetilde{H}^s(\Omega)} := \int_{\R^d}\int_{\R^d} \frac{(u(x) - u(y))(v(x) - v(y))}{|x-y|^{d+2s}} \dy \dx.
\end{equation}

\begin{lemma}\label{lem_trunc_tildeh}
 Let $V:=\widetilde{H}^s(\Omega)$ provided with the inner product \eqref{eq_innerpr_tildeh}. Then \cref{ass_truncation}-\ref{ass_item_trunc}
 is satisfied.
\end{lemma}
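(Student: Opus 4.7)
My plan is to verify the two parts of \cref{ass_truncation}-\ref{ass_item_trunc} separately for $V=\widetilde{H}^s(\Omega)$: first that the truncation $v(x):=\max(-1,\min(u(x),+1))$ still lies in $\widetilde{H}^s(\Omega)$, and second that the key inequality $\langle u,v\rangle_V \ge \|v\|_V^2$ holds for the inner product \eqref{eq_innerpr_tildeh}. Since functions in $\widetilde{H}^s(\Omega)$ are defined on all of $\R^d$ (extended by zero outside $\Omega$), and the clipping map $T(t):=\max(-1,\min(t,+1))$ fixes zero, $v$ extended by zero outside $\Omega$ equals $T\circ u$ on all of $\R^d$. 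The membership $v\in \widetilde{H}^s(\Omega)$ then follows since $T$ is $1$-Lipschitz, giving $|v(x)-v(y)|\le |u(x)-u(y)|$, so the Gagliardo seminorm of $v$ over $\R^d\times\R^d$ is bounded by that of $u$, and $v$ vanishes outside $\Omega$ by construction.

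The heart of the argument is a pointwise inequality under the double integral. Writing $S(t):=t-T(t)$, I would show that for all $a,b\in\R$,
\[
 (a-b)(T(a)-T(b)) - (T(a)-T(b))^2 = (T(a)-T(b))\,(S(a)-S(b)) \ge 0.
\]
Both $T$ and $S$ are monotonically non-decreasing on $\R$ (indeed, $S(t)=0$ on $[-1,1]$, $S(t)=t-1$ for $t\ge 1$, and $S(t)=t+1$ for $t\le -1$), so the factors $T(a)-T(b)$ and $S(a)-S(b)$ share a common sign for any $a,b$, making the product non-negative.

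Once this pointwise bound is in hand, I would multiply by the positive kernel $|x-y|^{-(d+2s)}$ and integrate over $\R^d\times\R^d$. This directly yields
\[
 \langle u,v\rangle_{\widetilde{H}^s(\Omega)} - \|v\|_{\widetilde{H}^s(\Omega)}^2 = \int_{\R^d}\!\int_{\R^d}\frac{(T(u(x))-T(u(y)))(S(u(x))-S(u(y)))}{|x-y|^{d+2s}}\dy\dx \ge 0,
\]
which is exactly the required inequality. The only subtle case is when one of $x,y$ lies outside $\Omega$, but there $u$ and $v$ both vanish, so the integrand reduces to the case $b=0$ of the pointwise identity, which is covered by the monotonicity argument above. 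The main obstacle I anticipate is merely noticing the algebraic factorization into $(T(a)-T(b))(S(a)-S(b))$; everything else is routine.
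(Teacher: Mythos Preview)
Your proposal is correct. The paper's own proof consists of two citations (to \cite{Warma2015} for stability of truncation in $H^s(\R^d)$ via the $1$-Lipschitz property, and to \cite[Proof of Theorem 2.9, (2.30)]{AntilPfeffererWarma2017} for the inequality), and your explicit argument---the factorization $(a-b)(T(a)-T(b))-(T(a)-T(b))^2=(T(a)-T(b))(S(a)-S(b))$ together with monotonicity of both $T$ and $S$---is exactly the standard computation underlying those references, so the approaches coincide.
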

\begin{proof}
 Let $u \in \widetilde{H}^s(\Omega)$. Then the truncated function $v:=\max(-1, \min(u,+1))$ belongs to $H^s(\R^d)$ by \cite[Lemma 2.7]{Warma2015},
 and consequently $v\in \widetilde{H}^s(\Omega)$.
 The inequality $\langle u,v\rangle_{\widetilde{H}^s(\Omega)} \ge \|v\|_{\widetilde{H}^s(\Omega)}^2$ can be proven following
 \cite[Proof of Theorem 2.9, (2.30)]{AntilPfeffererWarma2017}.
\end{proof}

Next, we discuss the spectral definition. Let $-\Delta_\Omega$ be the realization in $L^2(\Omega)$ of the Laplace
operator with zero Dirichlet boundary conditions. By classical results, $-\Delta_\Omega$ has a compact resolvent
and its eigenvalues form a non-decreasing sequence $0 < \mu_1 \le \mu_2 \le \cdots \le \mu_k \le \cdots$
with $\lim_{k\rightarrow \infty}
\mu_k = \infty$. Let $\psi_k \in H^1_0(\Omega)$ be the orthonormal eigenfunctions associated with
$\mu_k$. These eigenfunctions form an orthonormal basis of $L^2(\Omega)$. Then for any
$u\in C_c^\infty(\Omega)$, the fractional powers of $-\Delta_\Omega$ can be defined as
\begin{equation}\label{eq:spect}
	(-\Delta_\Omega)^s u := \sum_{k=1}^\infty \mu_k^s u_k \psi_k \quad \mbox{with} \quad
		u_k = \int_\Omega u \psi_k \dx .
\end{equation}
By density, this definition can be extended to the Sobolev space
\[
	\mathbb{H}^s(\Omega) := \left\{ u \in L^2(\Omega) \, : \,
			\| u \|_{\mathbb{H}^s(\Omega)}^2 = \| (-\Delta_\Omega)^{\frac{s}{2}} u \|^2_{L^2(\Omega)} := \sum_{k=1}^\infty \mu_k^s u_k^2 < \infty \right\} .
\]
% For the definition of $(-\Delta_\Omega)^s$ with nonhomogeneous boundary conditions, see
% \cite{HAntil_JPfefferer_SRogovs_2018a}.
% We denote the dual space of $\mathbb{H}^s(\Omega)$ by $\mathbb{H}^{-s}(\Omega)$.
The space
$\mathbb{H}^s(\Omega)$ can be characterized in terms of the fractional order Sobolev space defined
above.
Following \cite[Chapter 1]{LionsMagenes1972}, we have that
\[
	\mathbb{H}^s(\Omega)
	= \left\{
	   \begin{array}{ll}
	   	H^s_0(\Omega) & \mbox{if } s \neq \frac12 \\
		H^{\frac12}_{00}(\Omega) & \mbox{if } s = \frac12 .
	   \end{array}
	  \right.
\]

\begin{lemma}
 Let $V:=\mathbb{H}^s(\Omega)$ provided with the inner product
\[
 \langle u,v \rangle_{\mathbb{H}^s(\Omega)} := \langle (-\Delta_\Omega)^{\frac{s}{2}} u ,\ (-\Delta_\Omega)^{\frac{s}{2}}v\rangle_{L^2(\Omega)}.
\]
Then \cref{ass_truncation}-\ref{ass_item_trunc}
 is satisfied.
\end{lemma}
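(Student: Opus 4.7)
The plan is to exploit the Bochner subordination formula together with the heat-kernel representation of the Dirichlet semigroup $(e^{t\Delta_\Omega})_{t\ge 0}$, which is symmetric, positivity-preserving, and sub-Markovian on $L^2(\Omega)$. Writing $p_t^\Omega(x,y)=p_t^\Omega(y,x)\ge 0$ for its heat kernel, we have $\int_\Omega p_t^\Omega(x,y)\dy\le 1$ for a.e.\ $x$. Applied to $\mu_k^s=\frac{s}{\Gamma(1-s)}\int_0^\infty(1-e^{-t\mu_k})t^{-1-s}\dt$ and expanded in the orthonormal eigenbasis $(\psi_k)$, this yields the identity
\[
 \langle u,v\rangle_{\mathbb{H}^s(\Omega)}=\frac{s}{\Gamma(1-s)}\int_0^\infty \langle u-e^{t\Delta_\Omega}u,\,v\rangle_{L^2(\Omega)}\,\frac{\dt}{t^{1+s}}
\]
for $u,v\in\mathbb{H}^s(\Omega)$.

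Next I would use the heat kernel to decompose each $L^2$-pairing as
\[
 \langle u-e^{t\Delta_\Omega}u,v\rangle_{L^2}=\tfrac12\!\int_\Omega\!\!\int_\Omega (u(x)-u(y))(v(x)-v(y))\,p_t^\Omega(x,y)\dx\dy+\!\int_\Omega u(x)v(x)k_t(x)\dx,
\]
where $k_t(x):=1-\int_\Omega p_t^\Omega(x,y)\dy\ge 0$. Setting $v=T(u):=\max(-1,\min(u,+1))$, elementary case analysis (since $T$ is a normal contraction fixing $0$) gives the pointwise inequalities
\[
 (u(x)-u(y))(v(x)-v(y))\ge (v(x)-v(y))^2,\qquad u(x)v(x)\ge v(x)^2.
\]
Both integrands on the right are therefore dominated below by the corresponding integrands with $u$ replaced by $v$, so $\langle u-e^{t\Delta_\Omega}u,v\rangle_{L^2}\ge\langle v-e^{t\Delta_\Omega}v,v\rangle_{L^2}\ge 0$.

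Integrating this inequality against $t^{-1-s}\dt$ and using the subordination identity on the right-hand side yields $\langle u,v\rangle_{\mathbb{H}^s(\Omega)}\ge\|v\|_{\mathbb{H}^s(\Omega)}^2$, which is the inequality of \cref{ass_truncation}\ref{ass_item_trunc}. To confirm that $v\in V$ I would first apply the same chain of inequalities with the outer integral truncated to $t\in[\delta,R]$: this gives a finite upper bound independent of $\delta,R$ for the truncated Balakrishnan expression of $\|v\|_{\mathbb{H}^s}^2$, namely $\langle u,v\rangle_{\mathbb{H}^s}$ (finite by Cauchy-Schwarz as soon as one knows $v\in L^2$, which is clear from $|v|\le 1$ and $|v|\le|u|$). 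Monotone convergence then yields $\|v\|_{\mathbb{H}^s(\Omega)}<\infty$.

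The main obstacle is the rigorous justification of the Fubini exchange and of the subordination formula with one factor only in $L^2(\Omega)$ rather than in $\mathbb{H}^s(\Omega)$; the truncation-and-monotone-convergence trick in the previous paragraph is precisely designed to avoid this circularity. Alternatively, one can bypass the hands-on heat-kernel calculation altogether by noting that $\langle\cdot,\cdot\rangle_{\mathbb{H}^s(\Omega)}$ is a symmetric Dirichlet form (the Dirichlet-form property passes from $-\Delta_\Omega$ to its fractional power via Bochner subordination), and invoking the Beurling--Deny characterization: every normal contraction operates on $D(\mathcal{E})$ with $\mathcal{E}(T(u),T(u))\le\mathcal{E}(u,T(u))$, which is exactly the assertion.
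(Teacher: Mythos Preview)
Your approach is correct in substance and genuinely different from the paper's. The paper argues by citation: it identifies $\mathbb{H}^s(\Omega)$ with $\widetilde{H}^s(\Omega)$ (resp.\ $H^{1/2}_{00}(\Omega)$) via the Lions--Magenes characterization, invokes \cite{Warma2015} for closedness under truncation, and quotes the inequality $\langle u,v\rangle_{\mathbb{H}^s}\ge\|v\|_{\mathbb{H}^s}^2$ directly from \cite[(2.16)]{AntilPfeffererWarma2017}. Your heat-kernel/subordination argument is self-contained, avoids the case split at $s=1/2$, and in fact works verbatim for any symmetric sub-Markovian semigroup, so it buys more generality; the paper's route is shorter but opaque.

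Two small repairs. First, the bound you use to close the membership argument is circular as written: you cannot appeal to $\langle u,v\rangle_{\mathbb{H}^s}$ ``by Cauchy--Schwarz'' before knowing $v\in\mathbb{H}^s(\Omega)$. Instead, use the companion pointwise inequalities $(u(x)-u(y))(v(x)-v(y))\le(u(x)-u(y))^2$ and $u(x)v(x)\le u(x)^2$ (both valid for this monotone $1$-Lipschitz $T$) to dominate the truncated integral by $\|u\|_{\mathbb{H}^s(\Omega)}^2$, which \emph{is} finite; monotone convergence then gives $\|v\|_{\mathbb{H}^s}^2\le\|u\|_{\mathbb{H}^s}^2<\infty$. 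Second, in the Dirichlet-form shortcut, the inequality $\mathcal{E}(Tu,Tu)\le\mathcal{E}(u,Tu)$ is \emph{not} the Beurling--Deny/Markov property for arbitrary normal contractions (take $T(t)=-t/2$ for a counterexample in the killing part); it holds here because your $T$ is in addition nondecreasing, which is what makes $uv\ge v^2$ and $(u(x)-u(y))(v(x)-v(y))\ge(v(x)-v(y))^2$ true. State that hypothesis explicitly.
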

\begin{proof}
The proof is essentially the same as the one of \cref{lem_trunc_tildeh}.
 Let $u \in \mathbb{H}^s(\Omega)$.
If $s \ne 1/2$ then $ \mathbb{H}^s(\Omega) = \widetilde{H}^s(\Omega)$ and $v:=\max(-1, \min(u,+1))$ belongs to $\mathbb{H}^s(\Omega)$ by \cref{lem_trunc_tildeh}.
If $s=1/2$ then $v\in H^{1/2}(\Omega)$ by  \cite[Lemma 2.7]{Warma2015}, and $v\in H^{\frac12}_{00}(\Omega)$ follows.
 The inequality $\langle u,v\rangle_{\mathbb{H}^s(\Omega)} \ge \|v\|_{\mathbb{H}^s(\Omega)}^2$ is equivalent to
 \cite[(2.16)]{AntilPfeffererWarma2017}.
\end{proof}

\section{Iterative scheme}
\label{sec_algorithm}

Throughout this section, we suppose that $F$ fulfills the following condition.

\begin{assumption} \label{ass_iteration}
	Let \cref{ass_standing} be satisfied. In addition, we require:
	\begin{enumerate}
% 		\item $F': V \to V^*$ is locally Lipschitz continuous,
		\item $F'$ is completely continuous, i.e., $u_n \rightharpoonup u$ in $V$ implies $F'(u_n)\to F'(u)$ in $V^*$ for all sequences $(u_n)$.
		\item $F': V \to V^*$ is Lipschitz continuous on bounded sets, i.e., for all $R>0$ there is $L>0$ such that
		$\|F'(u) - F'(v)\|_{V^*} \le L\|u-v\|_V$ for all $u,v$ with $\|u\|_V,\|v\|_V\le R$.
		% see Zeidler volume II, section 42.6
	\end{enumerate}
\end{assumption}

We will use the following algorithm to compute candidates of solutions for the optimization problem.
Similars method were used in \cite{ItoKunisch2014}, where $F$ was assumed to be quadratic, and \cite{GeipingMoeller2018},
where a more abstract but finite-dimensional problem was analyzed.

% \alert{Similar to \cite{GeipingMoeller2018}!, smoothing appears in \cite{SongBabuPalomar2015} and references }

\begin{algorithm}\label{alg}
\begin{enumerate}
\item Choose monotonically decreasing sequence $(\epsilon_k)$ with $\epsilon_k\searrow0$, $\beta>1$, $\tilde L>0$.
	\item
	\label{step2}
	Determine $L_k\ge0$ to be the smallest number from $\{0\} \cup \{ \tilde L \beta^l, l \ge0\}$
	for which the solution $u_{k+1}$ of
	\begin{multline}\label{eq_iter_opt}
		\min_{u\in V} F(u_k) + F'(u_k)(u-u_k) + \frac {L_k}2 \|u-u_k\|_V^2 +\frac\alpha2\|u\|_V^2 \\
		+ \beta \int_\Omega \psi_{\epsilon_k}(u_k^2) + \psi_{\epsilon_k}'(u_k^2)(u^2-u_k^2)\dx,
	\end{multline}
	satisfies
	\begin{equation}\label{eq_iter_descent}
		F(u_{k+1}) \le  F(u_k)+  F'(u_k)(u_{k+1}-u_k) + L_k \|u_{k+1}-u_k\|_V^2.
	\end{equation}
\item
Set $k:=k+1$ and go to step \ref{step2}.
\end{enumerate}

\end{algorithm}

The optimization problem in \eqref{eq_iter_opt} is strongly convex since $\alpha>0$ and $\psi_{\epsilon_k}'\ge0$.
Hence, \eqref{eq_iter_opt} admits a unique solution for each $u_k$ and $L_k>0$.
In the following, we want to prove that the sequence $(u_k)$ is bounded in $V$. In addition, we are interested in proving the weak limit
points satisfy conditions similar to the one derived in \cref{thm_opt_cond}.

First, let us argue that we can find $L_k$ and $u_{k+1}$ satisfying the descent condition \eqref{eq_iter_descent}.
This is a consequence of the local Lipschitz continuity of $F'$.
If $F'$ is (global) Lipschitz continuous then \eqref{eq_iter_descent} is fulfilled as soon as $L_k$ is larger than the Lipschitz modulus of $F'$,
which is implied by the so-called descent lemma.

\begin{lemma}[Descent lemma]
\label{lem_descent}
 Let $M>0$ and $\rho>0$ be such that $\|F'(u)-F'(v)\|_{V^*} \le M \|u-v\|_V$ for all $v\in B_\rho(u)$.
 Then
 \[
  F(v) \le F(u) + F'(u)(v-u) + \frac M2 \|v-u\|_V^2 \quad \forall v\in B_\rho(u).
 \]
\end{lemma}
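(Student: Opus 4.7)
The plan is to follow the standard proof of the descent lemma, which relies on the fundamental theorem of calculus applied to the scalar function $t \mapsto F(u + t(v-u))$ on $[0,1]$. First, I would fix $u \in V$ and $v \in B_\rho(u)$, and observe that the whole segment $\{u + t(v-u) : t \in [0,1]\}$ lies in $B_\rho(u)$, since $\|u + t(v-u) - u\|_V = t\|v-u\|_V \le \|v-u\|_V < \rho$. This is the step that ensures the Lipschitz bound on $F'$ is applicable along the entire path of integration.

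Next, using the Fr\'echet differentiability of $F$ from \cref{ass_standing}, the chain rule yields that $t \mapsto F(u + t(v-u))$ is continuously differentiable on $[0,1]$ with derivative $F'(u + t(v-u))(v-u)$. The fundamental theorem of calculus then gives
\[
  F(v) - F(u) = \int_0^1 F'(u + t(v-u))(v-u) \dt.
\]
Subtracting $F'(u)(v-u) = \int_0^1 F'(u)(v-u) \dt$ from both sides produces
\[
  F(v) - F(u) - F'(u)(v-u) = \int_0^1 \bigl[F'(u + t(v-u)) - F'(u)\bigr](v-u) \dt.
\]

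Now I would estimate the right-hand side using the duality pairing bound $|\langle \xi, w\rangle_{V^*,V}| \le \|\xi\|_{V^*} \|w\|_V$, together with the Lipschitz hypothesis. For each $t \in [0,1]$, since $u + t(v-u) \in B_\rho(u)$, we have $\|F'(u + t(v-u)) - F'(u)\|_{V^*} \le M \cdot t\|v-u\|_V$. Therefore
\[
  F(v) - F(u) - F'(u)(v-u) \le \int_0^1 M t \|v-u\|_V^2 \dt = \frac{M}{2}\|v-u\|_V^2,
\]
which is the desired inequality.

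There is no substantial obstacle here; the only subtlety is checking that the Lipschitz bound is available along the entire segment, which is guaranteed by convexity of $B_\rho(u)$. The argument uses only the standard machinery and the $V$--$V^*$ duality pairing, so it transfers verbatim from the Euclidean case to the Hilbert space setting.
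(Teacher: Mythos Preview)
Your proof is correct and is essentially the standard argument the paper appeals to: the paper simply cites the mean-value theorem for G\^ateaux differentiable functions, and what you have written is precisely the explicit derivation of that result via the fundamental theorem of calculus along the segment from $u$ to $v$.
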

\begin{proof}
This is a consequence of the mean-value theorem for G\^ateaux differentiable functions, see, e.g., \cite[Proposition 3.3.4]{Schirotzek2007}.
\end{proof}

\begin{lemma}\label{lem_Lk}
Under \cref{ass_iteration}, for each $u_k$ there exists $L_k\ge0$ and $u_{k+1}\in V$ satisfying the conditions \eqref{eq_iter_opt}--\eqref{eq_iter_descent}.

In addition, if $(u_k)$ is bounded in $V$ then $(L_k)$ is bounded.
\end{lemma}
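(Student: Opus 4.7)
The plan is to argue by the backtracking mechanism built into Step \ref{step2}. For every fixed $L \ge 0$ the auxiliary objective in \eqref{eq_iter_opt} is strongly convex in $u$ with modulus at least $L+\alpha$ (since $\psi_{\epsilon_k}'\ge 0$ and $\alpha>0$), so a unique minimizer $u_{k+1}(L)\in V$ exists. The crucial quantitative step is to show that $\|u_{k+1}(L)-u_k\|_V\to 0$ as $L\to\infty$ at a rate that does not deteriorate as $\epsilon_k\searrow 0$.

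To establish this, I would test the first-order optimality condition of \eqref{eq_iter_opt} with $v = u_{k+1}(L)-u_k$, split the term $2\beta\int_\Omega \psi_{\epsilon_k}'(u_k^2)\, u_{k+1}(L)\,(u_{k+1}(L)-u_k)\dx$ as the nonnegative quadratic $2\beta\int_\Omega \psi_{\epsilon_k}'(u_k^2)(u_{k+1}(L)-u_k)^2\dx$ plus a cross term $2\beta\int_\Omega \psi_{\epsilon_k}'(u_k^2)\,u_k(u_{k+1}(L)-u_k)\dx$, and control the latter by a weighted Cauchy--Schwarz inequality together with the upper bound $\int_\Omega \psi_{\epsilon_k}'(u_k^2)u_k^2\dx \le \tfrac{p}{2}\|u_k\|_p^p$ coming from \cref{lem_est_Gpu}. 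After absorbing the positive quadratic into the left-hand side, one arrives at
\[
(L+\alpha)\|u_{k+1}(L)-u_k\|_V^2 \le c_1(u_k)\,\|u_{k+1}(L)-u_k\|_V + c_2(u_k),
\]
with constants $c_1,c_2$ depending only on $\|F'(u_k)\|_{V^*}$, $\|u_k\|_V$, and $\|u_k\|_p$ and in particular independent of both $L$ and $\epsilon_k$. Hence $\|u_{k+1}(L)-u_k\|_V = O(L^{-1/2})$.

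By \cref{ass_iteration}(2) there are $\rho,M>0$ with $F'$ being $M$-Lipschitz on $B_\rho(u_k)$. For $L$ large enough that both $\|u_{k+1}(L)-u_k\|_V\le \rho$ and $L\ge M/2$, the descent lemma \cref{lem_descent} applied at $u=u_k$, $v=u_{k+1}(L)$ gives
\[
F(u_{k+1}(L)) \le F(u_k)+F'(u_k)(u_{k+1}(L)-u_k)+\tfrac{M}{2}\|u_{k+1}(L)-u_k\|_V^2,
\]
which implies \eqref{eq_iter_descent} since $M/2\le L$. Consequently the unbounded backtracking set $\{0\}\cup\{\tilde L\beta^l:l\ge 0\}$ contains a smallest element $L_k$ beyond which \eqref{eq_iter_descent} holds, and the associated $u_{k+1}$ is the required iterate.

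For the second claim, assume $\|u_k\|_V\le R$ uniformly. Then $\|u_k\|_p$ is uniformly bounded via the embeddings $V\hookrightarrow L^2\hookrightarrow L^p$, $\|F'(u_k)\|_{V^*}$ is uniformly bounded by the local Lipschitz property of $F'$ (\cref{ass_iteration}(2)), and $M,\rho$ may be chosen uniformly on $B_{R+1}(0)$. The critical threshold $L^\ast$ therefore becomes $k$-independent and the backtracking rule yields $L_k\le\max(\tilde L,\beta L^\ast)$. I expect the main obstacle to be precisely this $\epsilon_k$-independence of the estimates: a naive $L^\infty$ bound $\psi_{\epsilon_k}'\le\tfrac{p}{2}\epsilon_k^{p-2}$ on the cross term would yield constants blowing up with $\epsilon_k$ and destroy the uniform bound on $(L_k)$, so the pairing of that cross term with the nonnegative quadratic in $u_{k+1}-u_k$ already present in the optimality condition (via \cref{lem_est_Gpu}) is the key manipulation.
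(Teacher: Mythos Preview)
Your proposal is correct and follows the same overall strategy as the paper: establish that the minimizer $u_{k+1}(L)$ of \eqref{eq_iter_opt} satisfies $\|u_{k+1}(L)-u_k\|_V = O(L^{-1/2})$ with constants independent of $\epsilon_k$, then invoke the descent lemma and the backtracking rule. The difference lies in how the key a~priori bound is obtained. The paper does not go through the first-order optimality condition; instead it simply compares the value of the objective in \eqref{eq_iter_opt} at the minimizer $w_{n,k}$ with its value at the feasible point $u_k$, and uses concavity and nonnegativity of $\psi_{\epsilon_k}$ to drop the entire term $\beta\int(\psi_{\epsilon_k}(u_k^2)+\psi_{\epsilon_k}'(u_k^2)(w_{n,k}^2-u_k^2))\dx\ge 0$ at once, arriving at
\[
\tfrac{\alpha}{2}\|u_k\|_V^2+\beta\int_\Omega\psi_{\epsilon_k}(u_k^2)\dx\;\ge\;F'(u_k)(w_{n,k}-u_k)+\tfrac{n}{2}\|w_{n,k}-u_k\|_V^2,
\]
whose left-hand side is bounded uniformly in $k$ via \cref{lem_psi_properties}\ref{item_prop_psi_3}. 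This avoids your splitting of the cross term and the weighted Cauchy--Schwarz/\cref{lem_est_Gpu} manoeuvre entirely, so it is somewhat shorter. Your route, by contrast, works at the level of the Euler--Lagrange equation and recycles \cref{lem_est_Gpu}; this is a bit more computational but has the merit of making explicit exactly which structural feature (the pairing $\int\psi_{\epsilon_k}'(u_k^2)u_k^2\dx\le \tfrac p2\|u_k\|_p^p$) delivers the $\epsilon_k$-independence that a naive $\|\psi_{\epsilon_k}'\|_{L^\infty}$ bound would not.
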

\begin{proof}
For $n\in \N$, let $w_{n,k}$ be the solution of
\begin{multline*}
    \min_{u\in V} F(u_k) + F'(u_k)(u-u_k) + \frac n2 \|u-u_k\|_V^2 +\frac\alpha2\|u\|_V^2+ \\
    \beta \int_\Omega \psi_{\epsilon_k}(u_k^2) + \psi_{\epsilon_k}'(u_k^2)(u^2-u_k^2)\dx.
\end{multline*}
By optimality of $w_{n,k}$, we have
\begin{multline*}
F(u_k)  +\frac\alpha2\|u_k\|_V^2+ \beta \int_\Omega \psi_{\epsilon_k}(u_k^2) \dx
\\
\ge
	F(u_k) + F'(u_k)(w_{n,k}-u_k) + \frac n2 \|w_{n,k}-u_k\|_V^2 +\frac\alpha2\|w_{n,k}\|_V^2\\
	+ \beta \int_\Omega \psi_{\epsilon_k}(u_k^2) + \psi_{\epsilon_k}'(u_k^2)(w_{n,k}^2-u_k^2)\dx.
\end{multline*}
Due to the concavity and non-negativity of $\psi_{\epsilon_k}$, we obtain $\psi_{\epsilon_k}(u_k^2) + \psi_{\epsilon_k}'(u_k^2)(w_{n,k}^2-u_k^2) \ge \psi_{\epsilon_k}(w_{n,k}^2) \ge0$.
This implies
\begin{equation}
\label{eq613}
\begin{split}
\frac\alpha2\|u_k\|_V^2+  \beta \int_\Omega \psi_{\epsilon_k}(u_k^2)
&\ge
	F'(u_k)(w_{n,k}-u_k) + \frac n2 \|w_{n,k}-u_k\|_V^2  \\
	&\ge \frac n4  \|w_{n,k}-u_k\|_V^2 - \frac 1n \|F'(u_k)\|_{V^*}^2
\end{split}
\end{equation}
Consequently,  $w_{n,k}\to u_k$ for $n\to \infty$. By local Lipschitz continuity of $F'$ and \cref{lem_descent},
there are $M>0$ and $N>0$ such that
% $\|F'(u_k) - F'(w_{n,k})\|_{V^*} \le M \|u_k -w_{n,k}\|_V$
% for all $n>N$. Then the descent lemma implies
for all $n>N$
\[
	F(w_{n,k}) \le F(u_k) + F'(u_k)(w_{n,k}-u_k) + \frac M2\|w_{n,k}-u_k\|_V^2
\]
holds,
and condition \eqref{eq_iter_descent} is satisfied for $L_k>\max(N,\frac M2)$.

Let us now assume that the sequence $(u_k)$ is bounded, i.e, $\|u_k\|_V\le R$ for all $k$.
From \eqref{eq613} and the properties of $F'$, we find that there is $K>0$ such that $\|w_{n,k}-u_k\|_V^2 \le K/n$ for all $k$ and $n$.
Then $\|w_{n,k}\|_V \le R+1$ for all $n>K$ and all $k$.
Let $M$ be the Lipschitz modulus of $F'$ on $B_{R+1}(0)$. Then the  descent condition  \eqref{eq_iter_descent} is satisfied
whenever $L_k \ge \max(K, \frac M2)$. Due to the selection strategy of $L_k$ in \cref{alg}, it follows $L_k \le \max(K, \frac M2) \beta$.
\end{proof}

Using arguments as in the proof of \cref{lem_optcond_Pauxrho}, the iterate $u_{k+1}$ satisfies the following optimality condition
\begin{equation}\label{eq_optcon_iter}
\alpha \langle u_{k+1},v\rangle_V + L_k \langle u_{k+1}-u_k,v\rangle_V + F'(u_k)v + 2\beta \int_\Omega \psi_{\epsilon_k}'(u_k^2)u_{k+1}v\dx =0.
\end{equation}

Let us prove a first basic estimate, which gives us monotonicity of function values.
A similar result (for convex and quadratic $F$) can be found in \cite[Theorem 5.4]{ItoKunisch2014}.
Recall the definition of $\Phi_\epsilon$ in \eqref{eq_def_Phi_eps}, i.e.,
\[
	\Phi_\epsilon(u) := F(u) + \frac\alpha2 \|u\|_{V}^2 + \beta G_\epsilon(u).
\]

\begin{lemma}\label{lem_iter_basic}
	Let $(L_k,u_k)$ be  a sequence generated by \cref{alg} with $(\epsilon_k)$ monotonically decreasing.
	Then we have the following inequality
\[
\Phi_{\epsilon_{k+1}}(u_{k+1})
+\frac\alpha2  \|u_{k+1}-u_k\|_V^2
+\beta \int_\Omega \psi_{\epsilon_k}'(u_k^2)  (u_{k+1}-u_k)^2 \dx   %
\le
\Phi_{\epsilon_k}(u_k).
\]
\end{lemma}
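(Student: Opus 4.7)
The plan is to combine three ingredients: (i) optimality of $u_{k+1}$ as the minimizer of the strongly convex quadratic subproblem \eqref{eq_iter_opt}, (ii) the descent condition \eqref{eq_iter_descent}, and (iii) the concavity and $\epsilon$-monotonicity of $\psi_\epsilon$ established in \cref{lem_psi_properties}. Writing $J_k$ for the minimand of \eqref{eq_iter_opt}, ingredients (i) and (ii) will produce a lower bound for $\Phi_{\epsilon_k}(u_k) - F(u_{k+1}) - \frac\alpha2\|u_{k+1}\|_V^2$ in which the $G$-type surrogate still appears, and (iii) will then convert this surrogate into $\beta G_{\epsilon_{k+1}}(u_{k+1})$ plus the desired residual $\beta\int_\Omega \psi_{\epsilon_k}'(u_k^2)(u_{k+1}-u_k)^2\dx$.

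For the first step, observe that $J_k$ is a quadratic functional in $u$ whose pure-quadratic part is $\frac{L_k+\alpha}{2}\|u\|_V^2 + \beta\int_\Omega \psi_{\epsilon_k}'(u_k^2)u^2\dx$, a positive semi-definite form because $\psi_{\epsilon_k}'\ge 0$. Since $J_k$ has the unique minimizer $u_{k+1}$, the standard identity for quadratics gives
\[
J_k(u_k) - J_k(u_{k+1}) = \tfrac{L_k+\alpha}{2}\|u_k-u_{k+1}\|_V^2 + \beta\int_\Omega \psi_{\epsilon_k}'(u_k^2)(u_k-u_{k+1})^2 \dx,
\]
and $J_k(u_k) = \Phi_{\epsilon_k}(u_k)$ since all linearizations vanish at $u=u_k$. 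Expanding $J_k(u_{k+1})$ and then invoking \eqref{eq_iter_descent} in the form $F(u_k) + F'(u_k)(u_{k+1}-u_k) + L_k\|u_{k+1}-u_k\|_V^2 \ge F(u_{k+1})$ replaces the surrogate $F$-terms by $F(u_{k+1})$. The subtle algebraic point here is the cancellation of $L_k$: the subproblem penalty contributes $+\frac{L_k}{2}\|u_{k+1}-u_k\|_V^2$ to $J_k(u_{k+1})$, the descent rule costs $-L_k\|u_{k+1}-u_k\|_V^2$, and the quadratic identity above contributes $+\frac{L_k+\alpha}{2}\|u_{k+1}-u_k\|_V^2$; these sum to exactly $\frac\alpha2\|u_{k+1}-u_k\|_V^2$, independent of $L_k$.

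For step (iii), concavity of $\psi_{\epsilon_k}$ (\cref{lem_psi_properties}\,\ref{item_prop_psi_2}) yields the pointwise tangent inequality $\psi_{\epsilon_k}(u_{k+1}^2) \le \psi_{\epsilon_k}(u_k^2) + \psi_{\epsilon_k}'(u_k^2)(u_{k+1}^2 - u_k^2)$, while the monotonicity $\epsilon \mapsto \psi_\epsilon$ (\cref{lem_psi_properties}\,\ref{item_prop_psi_4}) combined with $\epsilon_{k+1}\le \epsilon_k$ gives $\psi_{\epsilon_k}(u_{k+1}^2) \ge \psi_{\epsilon_{k+1}}(u_{k+1}^2)$; integrating produces $G_{\epsilon_k}(u_k) + \int_\Omega \psi_{\epsilon_k}'(u_k^2)(u_{k+1}^2 - u_k^2)\dx \ge G_{\epsilon_{k+1}}(u_{k+1})$. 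Substituting this into the inequality obtained from (i)--(ii) yields exactly the claimed estimate. The main obstacle I anticipate is keeping the bookkeeping tidy, in particular tracking the factor-of-two discrepancy between the $L_k/2$ penalty inside \eqref{eq_iter_opt} and the $L_k$ coefficient in \eqref{eq_iter_descent}; the proof works cleanly only because these three $L_k$-contributions combine to cancel against the strong-convexity term, leaving only the $\alpha$- and $\psi'$-dependent residuals on the right.
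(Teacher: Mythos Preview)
Your proof is correct and follows essentially the same route as the paper. The only cosmetic difference is that the paper writes out the first-order optimality condition \eqref{eq_optcon_iter} for $u_{k+1}$, tests it with $v=u_{k+1}-u_k$, and then applies the polarization identity $a(a-b)=\tfrac12\big((a-b)^2+a^2-b^2\big)$ to generate the squares, whereas you invoke the equivalent quadratic-minimizer identity $J_k(u_k)-J_k(u_{k+1})=\tfrac{L_k+\alpha}{2}\|u_k-u_{k+1}\|_V^2+\beta\int_\Omega\psi_{\epsilon_k}'(u_k^2)(u_k-u_{k+1})^2\dx$ directly; the subsequent use of \eqref{eq_iter_descent}, concavity, and $\epsilon$-monotonicity is identical.
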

\begin{proof}
Testing \eqref{eq_optcon_iter} with $u_{k+1}-u_k$ gives
\begin{multline*}
	\alpha\langle u_{k+1},u_{k+1}-u_k\rangle_V + L_k \|u_{k+1}-u_k\|_V^2 + F'(u_k)(u_{k+1}-u_k) \\
	+ 2\beta \int_\Omega \psi_{\epsilon_k}'(u_k^2)u_{k+1}(u_{k+1}-u_k)\dx =0
\end{multline*}
Using $a(a-b) = \frac12 ( (a-b)^2 + a^2 - b^2)$ to produce squares, we find
\begin{multline*}
\frac\alpha2 \left( \|u_{k+1}\|_V^2 + \|u_{k+1}-u_k\|_V^2 - \|u_k\|_V^2\right) + L_k \|u_{k+1}-u_k\|_V^2 + F'(u_k)(u_{k+1}-u_k) \\
+\beta \int_\Omega \psi_{\epsilon_k}'(u_k^2) ( u_{k+1}^2 - u_k^2 + (u_{k+1}-u_k)^2 )\dx =0,
\end{multline*}
which we can rearrange to
\begin{multline*}
\frac\alpha2 \left( \|u_{k+1}\|_V^2 + \|u_{k+1}-u_k\|_V^2 \right) + L_k \|u_{k+1}-u_k\|_V^2 +F(u_k)+  F'(u_k)(u_{k+1}-u_k) \\
+\beta \int_\Omega\psi_{\epsilon_k}(u_k^2) + \psi_{\epsilon_k}'(u_k^2) ( u_{k+1}^2 - u_k^2 + (u_{k+1}-u_k)^2 )\dx  \\
=\frac\alpha 2 \|u_k\|_V^2 + F(u_k) + \beta \int_\Omega\psi_{\epsilon_k}(u_k^2) \dx = \Phi_{\epsilon_k}(u_k).
\end{multline*}
Using condition \eqref{eq_iter_descent}, concavity of $t\mapsto \psi_{\epsilon_k}(t)$, and monotonicity of $(\epsilon_k)$ implies
\[\begin{split}
\Phi_{\epsilon_k}(u_k)
&\ge
\Phi_{\epsilon_k}(u_{k+1})
+\frac\alpha2  \|u_{k+1}-u_k\|_V^2
+\beta \int_\Omega \psi_{\epsilon_k}'(u_k^2)  (u_{k+1}-u_k)^2 \dx   %
\\
&\ge
\Phi_{\epsilon_{k+1}}(u_{k+1})
+\frac\alpha2  \|u_{k+1}-u_k\|_V^2
+\beta \int_\Omega \psi_{\epsilon_k}'(u_k^2)  (u_{k+1}-u_k)^2 \dx,
\end{split}\]
which is the claim.
\end{proof}

\begin{lemma}\label{lem_iter_bounded}
	Let $(L_k,u_k)$ be  a sequence generated by \cref{alg}.
	Then
	$(u_k)$ and $(F'(u_k))$ are bounded in $V$ and $V^*$, respectively.
\end{lemma}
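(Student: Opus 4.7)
The plan is to exploit the monotone decrease of the functional values $\Phi_{\epsilon_k}(u_k)$ provided by \cref{lem_iter_basic}, combined with the coercivity coming from the $\frac{\alpha}{2}\|u\|_V^2$ term and the affine lower bound on $F$ from \cref{ass_standing}.

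First, iterating the inequality of \cref{lem_iter_basic} and discarding the non-negative terms on the left, I obtain the one-sided bound
\[
\Phi_{\epsilon_k}(u_k) \le \Phi_{\epsilon_0}(u_0) \qquad \forall k\ge0.
\]
Next, I will bound $\Phi_{\epsilon_k}(u_k)$ from below in terms of $\|u_k\|_V$. By \cref{ass_standing}, there exist $g\in V^*$ and $c\in\R$ such that $F(u)\ge \langle g,u\rangle_{V^*,V}+c$, and by \cref{lem_psi_properties}\ref{item_prop_psi_1} we have $G_{\epsilon_k}(u_k)\ge 0$. Using Young's inequality $\langle g,u_k\rangle_{V^*,V} \ge -\tfrac{1}{\alpha}\|g\|_{V^*}^2 - \tfrac{\alpha}{4}\|u_k\|_V^2$, this yields
\[
\Phi_{\epsilon_k}(u_k)\ge \frac{\alpha}{4}\|u_k\|_V^2 + c - \frac{1}{\alpha}\|g\|_{V^*}^2.
\]
Combining the two inequalities gives a uniform bound on $\|u_k\|_V$, which is exactly the boundedness of $(u_k)$ in $V$.

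Finally, since $(u_k)$ lies in some ball $B_R(0)\subset V$, \cref{ass_iteration} gives that $F'$ is Lipschitz on this ball, hence in particular bounded on it, so $(F'(u_k))$ is bounded in $V^*$. No real obstacle is expected here; the only care needed is that the monotonicity statement in \cref{lem_iter_basic} already absorbs the possible growth of $G_{\epsilon_k}$ with $k$ (since $(\epsilon_k)$ is decreasing and $\epsilon\mapsto \psi_\epsilon$ is increasing by \cref{lem_psi_properties}\ref{item_prop_psi_4}), so comparing to the fixed initial value $\Phi_{\epsilon_0}(u_0)$ is legitimate.
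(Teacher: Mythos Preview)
Your proof is correct and follows essentially the same line as the paper: monotonicity of $\Phi_{\epsilon_k}(u_k)$ from \cref{lem_iter_basic}, plus the affine lower bound on $F$ and $\alpha>0$, gives the $V$-bound on $(u_k)$. The only minor difference is in the last step: the paper invokes the complete continuity of $F'$ from \cref{ass_iteration} (bounded sequences in $V$ are weakly precompact, so $F'$ maps them to relatively compact, hence bounded, sets in $V^*$), whereas you use the local Lipschitz continuity of $F'$ to bound $\|F'(u_k)\|_{V^*}$ by $\|F'(0)\|_{V^*}+LR$. Both arguments are immediate from the stated assumptions, so this is not a genuinely different route.
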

\begin{proof}
By \cref{lem_iter_basic}, $(\Phi_{\epsilon_k}(u_k))$ is monotonically decreasing. Due to $\alpha>0$
and \cref{ass_standing}, $(u_k)$ is bounded in $V$.
Due to complete continuity of $F'$, cf., \cref{ass_iteration}, $(F'(u_k))$ is bounded in $V^*$.
\end{proof}

\begin{corollary}\label{coro_Lk_bounded}
	Let $(L_k,u_k)$ be  a sequence generated by \cref{alg}.
	Then $(L_k)$ is bounded.
\end{corollary}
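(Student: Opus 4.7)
The plan is to observe that this corollary is essentially an immediate combination of two already established facts, so the proof reduces to citing them in the right order. First I would invoke \cref{lem_iter_bounded}, which guarantees that the sequence of iterates $(u_k)$ is bounded in $V$. Then I would appeal directly to the second statement of \cref{lem_Lk}, which asserts that whenever $(u_k)$ is bounded in $V$, the selected parameters $(L_k)$ are bounded.

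The only thing worth double-checking is that \cref{lem_Lk}'s proof of boundedness of $(L_k)$ does not implicitly require additional assumptions beyond boundedness of $(u_k)$. Looking back at that proof, the argument there uses only the boundedness of $(u_k)$ together with \cref{ass_iteration} (local Lipschitz continuity of $F'$ on the ball $B_{R+1}(0)$, where $R$ bounds $\|u_k\|_V$) and the estimate \eqref{eq613} which gives $\|w_{n,k}-u_k\|_V^2 \le K/n$ uniformly in $k$. Both ingredients are available here, so no extra work is needed.

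Therefore, the main (and essentially only) obstacle is rhetorical: to present this as a genuine corollary rather than a restatement. I would phrase it as a one-line deduction, noting that the boundedness of $(u_k)$ in $V$ shown in \cref{lem_iter_bounded} combined with the backtracking selection rule for $L_k$ in \cref{alg} (as analyzed in the second part of \cref{lem_Lk}) yields the uniform bound $L_k \le \max(K,M/2)\beta$ with $K$ and $M$ as in that proof.
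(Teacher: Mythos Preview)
Your proposal is correct and matches the paper's proof exactly: the paper simply writes that the claim follows directly from \cref{lem_iter_bounded} and \cref{lem_Lk}. Your additional remarks about checking the hypotheses of \cref{lem_Lk} are fine but unnecessary for the write-up.
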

\begin{proof}
Follows directly from \cref{lem_iter_bounded} and \cref{lem_Lk}.
\end{proof}

\begin{corollary}\label{coro_summing}
	Let $(L_k,u_k)$ be  a sequence generated by \cref{alg}.
	Then
    \[
\sum_{k=1}^\infty \frac\alpha2  \|u_{k+1}-u_k\|_V^2
+\beta \int_\Omega \psi_{\epsilon_k}'(u_k^2)  (u_{k+1}-u_k)^2 \dx  <\infty.
\]
\end{corollary}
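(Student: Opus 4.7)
The plan is to sum the telescoping-type inequality provided by Lemma \ref{lem_iter_basic} and then argue that the resulting right-hand side stays bounded because $\Phi_{\epsilon_{N+1}}(u_{N+1})$ is bounded from below uniformly in $N$.

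First, I would apply Lemma \ref{lem_iter_basic} with index $k$ ranging from $1$ to $N$ and sum the resulting inequalities. Since the inequality reads
\[
\Phi_{\epsilon_{k+1}}(u_{k+1}) + \frac\alpha2 \|u_{k+1}-u_k\|_V^2 + \beta \int_\Omega \psi_{\epsilon_k}'(u_k^2)(u_{k+1}-u_k)^2 \dx \le \Phi_{\epsilon_k}(u_k),
\]
the $\Phi$-terms telescope, yielding
\[
\sum_{k=1}^N \left( \frac\alpha2 \|u_{k+1}-u_k\|_V^2 + \beta \int_\Omega \psi_{\epsilon_k}'(u_k^2)(u_{k+1}-u_k)^2 \dx \right) \le \Phi_{\epsilon_1}(u_1) - \Phi_{\epsilon_{N+1}}(u_{N+1}).
\]

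Next, I would establish a uniform lower bound on $\Phi_{\epsilon_{N+1}}(u_{N+1})$. By \cref{ass_standing}, there exist $g \in V^*$ and $c \in \R$ with $F(u) \ge \langle g,u\rangle_{V^*,V} + c$ for all $u \in V$. Combining this with $G_\epsilon \ge 0$ (a consequence of $\psi_\epsilon \ge 0$ from \cref{lem_psi_properties}\ref{item_prop_psi_1}) gives
\[
\Phi_{\epsilon_{N+1}}(u_{N+1}) \ge \langle g, u_{N+1}\rangle_{V^*,V} + c + \frac\alpha2 \|u_{N+1}\|_V^2 \ge c - \frac{\|g\|_{V^*}^2}{2\alpha},
\]
where the last step uses the elementary estimate $\langle g,u\rangle_{V^*,V} + \frac\alpha2 \|u\|_V^2 \ge -\frac{\|g\|_{V^*}^2}{2\alpha}$. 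Thus the right-hand side of the telescoped inequality is bounded above by a constant independent of $N$.

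Finally, since both summands on the left are non-negative (recall $\alpha > 0$ and $\psi_{\epsilon_k}' \ge 0$), the monotone partial sums converge, which proves the claim. The only potentially subtle point is ensuring that $\Phi_{\epsilon_{N+1}}(u_{N+1})$ does not drift to $-\infty$; this is handled cleanly by the coercivity provided by the $\frac\alpha2\|\cdot\|_V^2$ term together with the affine lower bound on $F$, so no further work is required.
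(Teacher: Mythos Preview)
Your argument is correct and follows exactly the approach of the paper: telescope the inequality of \cref{lem_iter_basic} and use the uniform lower bound on $\Phi_\epsilon$ coming from \cref{ass_standing}. You have simply spelled out the lower-bound estimate in more detail than the paper does.
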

\begin{proof}
 By \cref{ass_standing}, $\Phi_\epsilon$ is bounded from below uniformly in $\epsilon$.
 Summation of the inequality of \cref{lem_iter_basic} implies the claim.
\end{proof}

In order to be able to pass to the limit \eqref{eq_optcon_iter}, we need the following result.

\begin{lemma}\label{lem_iter_lambdau}
	Let $(L_k,u_k)$ be  a sequence generated by \cref{alg}.
	Let $\bar u$ be the weak limit of the subsequence $(u_{k_n})$ in $V$.
	Then
	\[
		2\int_\Omega \psi_{\epsilon_{k_n}}'(u_{k_n}^2)  u_{k_n+1}^2 \dx \to p \int_\Omega |\bar u|^p \dx.
	\]
% as in \cref{lem_conv_lambdau} - dominated converge should do the trick
\end{lemma}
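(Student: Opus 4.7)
The plan is to decompose the integrand so that one piece matches the situation of \cref{lem_conv_lambdau} and the other is a cross term controlled by the summability in \cref{coro_summing}. Write
\[
2\psi_{\epsilon_{k_n}}'(u_{k_n}^2)\,u_{k_n+1}^2
= 2\psi_{\epsilon_{k_n}}'(u_{k_n}^2)\,u_{k_n}^2
  + 2\psi_{\epsilon_{k_n}}'(u_{k_n}^2)\bigl(u_{k_n+1}-u_{k_n}\bigr)\bigl(u_{k_n+1}+u_{k_n}\bigr).
\]
First, I would use \cref{coro_summing} to see that $\|u_{k+1}-u_k\|_V\to 0$, which combined with $u_{k_n}\rightharpoonup\bar u$ in $V$ yields $u_{k_n+1}\rightharpoonup\bar u$ in $V$. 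By the compactness of $V\hookrightarrow L^2(\Omega)$ we obtain $u_{k_n}\to\bar u$ and $u_{k_n+1}\to \bar u$ in $L^2(\Omega)$; passing to a further subsequence (and using \cite[Thm.\,4.9]{Brezis2011}) I get pointwise a.e.\ convergence together with a common $L^2$-dominant $w$ for $(u_{k_n})$ and $(u_{k_n+1})$.

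For the first piece, the identity $2\psi'_{\epsilon}(t^2)t^2 = p\min(\epsilon^{p-2},|t|^{p-2})t^2$ from \cref{lem_est_Gpu} lets me copy the argument of \cref{lem_conv_lambdau} verbatim (upper bound by $p|u_{k_n}|^p\le pw^p\in L^1$, dominated convergence on $\{\bar u\ne 0\}$), giving
\[
\int_\Omega 2\psi_{\epsilon_{k_n}}'(u_{k_n}^2)\,u_{k_n}^2\dx \longrightarrow p\int_\Omega |\bar u|^p\dx.
\]

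For the cross term I would apply Cauchy--Schwarz with weight $\psi_{\epsilon_{k_n}}'(u_{k_n}^2)\ge 0$:
\[
\left|\int_\Omega 2\psi_{\epsilon_{k_n}}'(u_{k_n}^2)(u_{k_n+1}-u_{k_n})(u_{k_n+1}+u_{k_n})\dx\right|
\le 2\,A_n^{1/2}\,B_n^{1/2},
\]
with $A_n:=\int_\Omega\psi_{\epsilon_{k_n}}'(u_{k_n}^2)(u_{k_n+1}-u_{k_n})^2\dx$ and $B_n:=\int_\Omega\psi_{\epsilon_{k_n}}'(u_{k_n}^2)(u_{k_n+1}+u_{k_n})^2\dx$. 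By \cref{coro_summing}, $A_n\to 0$. Using $(u_{k_n+1}+u_{k_n})^2\le 4(u_{k_n+1}-u_{k_n})^2+6u_{k_n}^2$ together with $2\int\psi_{\epsilon_{k_n}}'(u_{k_n}^2)u_{k_n}^2\dx\le p\|u_{k_n}\|_p^p$ (from \cref{lem_est_Gpu}) and the boundedness of $(u_{k_n})$ in $V\hookrightarrow L^2\hookrightarrow L^p$, one gets $B_n$ bounded. Hence the cross term vanishes in the limit, which proves the claim for the extracted subsequence. Uniqueness of the limit upgrades the convergence to the full sequence $(u_{k_n})$ via the standard subsequence-of-subsequence argument.

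The main obstacle is the mismatch between the argument $u_{k_n}$ of the weight $\psi_{\epsilon_{k_n}}'$ and the factor $u_{k_n+1}^2$; this is precisely what the cross-term analysis takes care of, and this is the only point where the algorithmic summability estimate in \cref{coro_summing} is essential. Everything else reduces to the dominated-convergence scheme already used in \cref{lem_conv_lambdau}.
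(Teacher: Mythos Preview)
Your proof is correct and follows essentially the same route as the paper: isolate the main term $2\int\psi_{\epsilon_{k_n}}'(u_{k_n}^2)u_{k_n}^2\dx$ and handle it via the argument of \cref{lem_conv_lambdau}, then kill the remainder by a weighted Cauchy--Schwarz inequality whose vanishing factor is supplied by \cref{coro_summing}. The only cosmetic difference is that the paper writes $u_{k_n+1}^2-u_{k_n}^2=(u_{k_n+1}-u_{k_n})^2+2u_{k_n}(u_{k_n+1}-u_{k_n})$ and pairs the cross term directly with $\int\psi'\,u_{k_n}^2$, avoiding your auxiliary inequality for $(u_{k_n+1}+u_{k_n})^2$.
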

\begin{proof}
 Due to \cref{coro_summing}, $u_{k_n+1} \rightharpoonup \bar u$ in $V$.
 After possibly extracting a subsequence if necessary, we can assume that $u_{k_n}$ and $u_{k_n+1}$ converge to $\bar u$  almost everywhere,
 and there is $w\in L^2(\Omega)$ such that $|u_{k_n}|, |u_{k_n+1}|\le w$ almost everywhere.
 Arguing as in the proof of \cref{lem_conv_lambdau}, we have
 \[
2\int_\Omega \psi_{\epsilon_{k_n}}'(u_{k_n}^2)  u_{k_n}^2 \dx = G_{\epsilon_k}'(u_k)u_k  \to p \int_\Omega |\bar u|^p \dx.
\]
\cref{coro_summing} implies
\[
 \int_\Omega \psi_{\epsilon_{k_n}}'(u_{k_n}^2)  (u_{k_n+1}-u_{k_n})^2 \dx  \to 0.
\]
Then by H\"older's inequality, we obtain
\begin{multline*}
\left| \int_\Omega \psi_{\epsilon_{k_n}}'(u_{k_n}^2) (u_{k_n+1}-u_{k_n}) u_{k_n} \dx \right|
\\
 \le
  \left( \int_\Omega \psi_{\epsilon_{k_n}}'(u_{k_n}^2)  (u_{k_n+1}-u_{k_n})^2 \right)^{1/2}
  \left( \int_\Omega \psi_{\epsilon_{k_n}}'(u_{k_n}^2)  u_{k_n}^2 \dx\right)^{1/2}
 \to 0,
\end{multline*}
which implies $2\int_\Omega \psi_{\epsilon_{k_n}}'(u_{k_n}^2)  u_{k_n+1}^2 \dx \to p \int_\Omega |\bar u|^p \dx$.
As the limit is independent of the chosen subsequence of $(u_{k_n})$, the claim is proven.
\end{proof}

\begin{theorem}\label{thm_optcon_lim_iter}
Let \cref{ass_iteration} be satisfied.
	Let $(L_k,u_k)$ be  a sequence generated by \cref{alg}.
	Let $\bar u$ be the weak limit in $V$ of a subsequence of $(u_k)$.
	Then there is $\bar\lambda\in V^*$ such that
 \begin{equation}\label{eq_optcon_lim_iter}
  \alpha \langle \bar u, v\rangle_V  + \beta \langle \bar\lambda, v\rangle_{V^*,V}    = - F'(\bar u)v \qquad \forall v\in V.
 \end{equation}
 and
\[
   \langle \bar\lambda, \bar u\rangle_{V^*,V}  \ge  p \int_\Omega |\bar u|^p \dx.
 \]
\end{theorem}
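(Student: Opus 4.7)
The plan is to first define an appropriate sequence $\lambda_k \in V^*$ from the discrete optimality condition \eqref{eq_optcon_iter}, then pass to the weak limit in that condition to obtain the equation \eqref{eq_optcon_lim_iter} and identify $\bar\lambda$, and finally derive the inequality for $\langle\bar\lambda,\bar u\rangle_{V^*,V}$ by testing \eqref{eq_optcon_iter} with $u_{k+1}$ itself and exploiting weak lower semicontinuity of $\|\cdot\|_V^2$.

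Concretely, define $\lambda_k \in V^*$ by $\langle\lambda_k,v\rangle_{V^*,V} := 2\int_\Omega \psi_{\epsilon_k}'(u_k^2)\,u_{k+1}\,v\,\dx$ (well-defined as a continuous functional on $V$ since $\psi_{\epsilon_k}'$ is bounded and $V\hookrightarrow L^2(\Omega)$). Let $(u_{k_n})$ be the subsequence with $u_{k_n}\rightharpoonup \bar u$ in $V$. By \cref{coro_summing}, $\|u_{k+1}-u_k\|_V\to 0$, so $u_{k_n+1}\rightharpoonup\bar u$ in $V$ as well. By \cref{coro_Lk_bounded}, $(L_k)$ is bounded, hence $L_{k_n}(u_{k_n+1}-u_{k_n})\to 0$ strongly in $V$. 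By \cref{ass_iteration}, $F'(u_{k_n})\to F'(\bar u)$ strongly in $V^*$. Solving \eqref{eq_optcon_iter} for $\beta\lambda_{k_n}$ and passing to the limit against any test function $v\in V$ yields
\begin{equation*}
\beta\langle\lambda_{k_n},v\rangle_{V^*,V}\;\longrightarrow\;-\alpha\langle\bar u,v\rangle_V - F'(\bar u)v,
\end{equation*}
so defining $\bar\lambda\in V^*$ by $\beta\bar\lambda:= -\alpha J\bar u - F'(\bar u)$ (where $J:V\to V^*$ is the Riesz map) gives \eqref{eq_optcon_lim_iter}.

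For the inequality, the natural strategy—passing to the limit in $\langle\lambda_{k_n},u_{k_n+1}\rangle_{V^*,V}$—fails directly because both factors converge only weakly. I bypass this by testing \eqref{eq_optcon_iter} with $v=u_{k_n+1}$ itself, which gives
\begin{equation*}
2\beta\int_\Omega \psi_{\epsilon_{k_n}}'(u_{k_n}^2)\,u_{k_n+1}^2\,\dx = -\alpha\|u_{k_n+1}\|_V^2 - L_{k_n}\langle u_{k_n+1}-u_{k_n},u_{k_n+1}\rangle_V - F'(u_{k_n})u_{k_n+1}.
\end{equation*}
By \cref{lem_iter_lambdau}, the left-hand side converges to $p\beta\int_\Omega|\bar u|^p\dx$. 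On the right-hand side, the second term vanishes in the limit (bounded $L_{k_n}$, bounded $u_{k_n+1}$ in $V$, and $u_{k_n+1}-u_{k_n}\to 0$ in $V$) and the third term converges to $-F'(\bar u)\bar u$ (strong $V^*$ convergence of $F'(u_{k_n})$ paired with weak $V$ convergence of $u_{k_n+1}$). Hence the limit of $-\alpha\|u_{k_n+1}\|_V^2$ exists, and by weak lower semicontinuity it is bounded above by $-\alpha\|\bar u\|_V^2$. Passing to the limit therefore produces
\begin{equation*}
p\beta\int_\Omega|\bar u|^p\dx \;\le\; -\alpha\|\bar u\|_V^2 - F'(\bar u)\bar u.
\end{equation*}

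To finish, I test the already-established equation \eqref{eq_optcon_lim_iter} with $v=\bar u$, obtaining $\beta\langle\bar\lambda,\bar u\rangle_{V^*,V} = -\alpha\|\bar u\|_V^2 - F'(\bar u)\bar u$, and combining with the previous inequality yields $\langle\bar\lambda,\bar u\rangle_{V^*,V}\ge p\int_\Omega|\bar u|^p\dx$ after dividing by $\beta>0$. The main obstacle is precisely the one addressed by this detour: the failure of weak-weak pairing in $\langle\lambda_{k_n},u_{k_n+1}\rangle$, which forces us to accept an inequality rather than an equality and explains why the conclusion here is strictly weaker than the one in \cref{thm_opt_cond}.
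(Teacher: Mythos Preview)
Your proof is correct and follows essentially the same route as the paper: define $\lambda_k$ from the nonlinear term in \eqref{eq_optcon_iter}, pass to the weak limit to obtain \eqref{eq_optcon_lim_iter}, then test \eqref{eq_optcon_iter} with $u_{k_n+1}$, invoke \cref{lem_iter_lambdau} on the left, and use weak lower semicontinuity of $\|\cdot\|_V^2$ on the right to produce the inequality, which is then identified with $\beta\langle\bar\lambda,\bar u\rangle_{V^*,V}$ via \eqref{eq_optcon_lim_iter}. The only cosmetic difference is that the paper first shows $(\lambda_k)$ is bounded in $V^*$ and extracts a weakly convergent subsequence, whereas you observe directly that all other terms in \eqref{eq_optcon_iter} converge (for fixed $v$), so $\lambda_{k_n}\rightharpoonup\bar\lambda$ along the full subsequence with $\bar\lambda$ determined by the equation; this is slightly more direct but amounts to the same thing.
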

\begin{proof}
Let us define $\lambda_k \in V^*$ by
\[
\langle \lambda_k, v\rangle_{V^*,V} := 2\beta \int_\Omega \psi_{\epsilon_k}'(u_k^2)u_{k+1}v\dx  \quad v\in V.
\]
Due to the boundedness properties of \cref{lem_iter_bounded} and \cref{coro_Lk_bounded}, it follows that $(\lambda_k)$ is bounded in $V^*$.

Let $u_{k_n} \rightharpoonup \bar u$ in $V$.
After extraction of a subsequence if necessary, we can assume $\lambda_{k_n} \rightharpoonup \bar \lambda $ in $V^*$.
Then we can pass to the limit along the subsequence in \eqref{eq_optcon_iter} to obtain \eqref{eq_optcon_lim_iter}.
Here, we used complete continuity of $F'$ and \cref{coro_summing}.

Due to \cref{lem_iter_lambdau}, we have $\lambda_{k_n}(u_{k_n+1})  \to  p \int_\Omega |\bar u|^p \dx$. Testing \eqref{eq_optcon_iter} with $u_{k_n+1}$ yields
\[
 \alpha \| u_{k_n+1}\|_V^2 + L_{k_n} \langle u_{k_n+1}-u_{k_n},u_{k_n+1}\rangle_V + F'(u_{k_n})u_{k_n+1} = - \beta \lambda_{k_n}(u_{k_n+1}).
\]
Passing to the limit inferior results in
\[
 \alpha \| \bar u\|_V^2 + F'(\bar u)\bar u  \le   -p \int_\Omega |\bar u|^p \dx .
\]
The left-hand side is equal to $-\beta \langle \bar\lambda, \bar u\rangle_{V^*,V} $ by \eqref{eq_optcon_iter}, which proves the claim.
\end{proof}

The system satisfied by limits of the iteration in \cref{thm_optcon_lim_iter} is clearly weaker than the system provided by \cref{thm_opt_cond}.
This is due to the fact that we cannot expect strong convergence of the iterates of the method,
which would be necessary to pass to the limit $\langle \lambda_{k_n},u_{k_n+1}\rangle_{V^*,V} \to \langle \bar\lambda, \bar u\rangle_{V^*,V} $.

% \printbibliography

\bibliography{hslp.bib}
\bibliographystyle{plain}

\end{document}